\pgfplotsset{compat=1.18}%
\newcommand{\tabitem}{\noindent\llap{\textbullet}~}
\theoremstyle{thmstyleone}%
\newtheorem{theorem}{Theorem}%
\newtheorem{proposition}[theorem]{Proposition}%
\theoremstyle{thmstyletwo}%
\newtheorem{remark}{Remark}%
\theoremstyle{thmstylethree}%
\begin{document}

\title[Efficient computation of the \texorpdfstring{$\operatorname{sinc}$}{sinc} matrix function]{Efficient computation of the \texorpdfstring{$\operatorname{sinc}$}{sinc} matrix function for the integration of second-order differential equations}

\author[1]{\fnm{Lidia} \sur{Aceto}}\email{lidia.aceto@uniupo.it}
\equalcont{These authors contributed equally to this work.}

\author*[2]{\fnm{Fabio} \sur{Durastante}}\email{fabio.durastante@unipit.it}
\equalcont{These authors contributed equally to this work.}

\affil[2]{\orgdiv{Dipartimento di Scienze e Innovazione Tecnologica}, \orgname{University of Eastern Piedmont}, \orgaddress{\street{Viale T. Michel, 11}, \city{Alessandria}, \postcode{15121}, \state{AL}, \country{Italy}}}

\affil*[2]{\orgdiv{Dipartimento di Matematica}, \orgname{University of Pisa}, \orgaddress{\street{Via F. Buonarroti, 1/C}, \city{Pisa}, \postcode{56127}, \state{PI}, \country{Italy}}}

\abstract{This work deals with the numerical solution of systems of oscillatory second-order differential equations which often arise from the semi-discretization in space of partial differential equations. Since these differential equations exhibit (pronounced or highly) oscillatory behavior, standard numerical methods are known to perform poorly. Our approach consists in directly discretizing the problem by means of Gautschi-type integrators based on $\operatorname{sinc}$ matrix functions. The novelty contained here is that of using a suitable rational approximation formula for the \texorpdfstring{$\operatorname{sinc}$}{sinc} matrix function to apply a rational Krylov-like approximation method with
suitable choices of poles. In particular, we discuss the application of the whole strategy to a
finite element discretization of the wave equation.}

\keywords{second-order differential equation; matrix function; sinc; rational Krylov methods}

\pacs[MSC2010]{65L06,15A16,41A20}

\maketitle

\section{Introduction}
We consider the numerical solution of the system of multi-frequency oscillatory  second-order differential equations of the form
\begin{equation}\label{eq:theproblem}
\begin{cases}\mathbf{y}''(t) + A \mathbf{y}(t) = \mathbf{f}(t),\\
\mathbf{y}(t_0) = \mathbf{y}_0,\\
\mathbf{y}'(t_0) = \mathbf{y}_1,
\end{cases} 
\end{equation}
where $A \in \mathbb{R}^{n \times n}$ is a symmetric and positive semi-definite matrix implicitly containing the dominant frequencies of the problem. Usually, oscillatory differential equations arise from semi-discretization in space  of $d$-dimensional partial differential equations 
\[
\begin{cases}
u_{tt}(\mathbf{x},t) + \mathcal{A}(u(\mathbf{x},t)) = f(\mathbf{x},t), & (\mathbf{x},t) \in  \Omega \times (t_0,t_f], \, \Omega \subseteq \mathbb{R}^d, \\
u(\mathbf{x},t_0) = u_0(\mathbf{x}), & \mathbf{x} \in  \Omega\\
u_t(\mathbf{x},t_0) = v_0(\mathbf{x}), & \mathbf{x} \in  \Omega\\
\mathcal{B}(u(\mathbf{x},t)) = 0, & (\mathbf{x},t) \in \partial \Omega \times [t_0,t_f],
\end{cases}
\]
for $\mathcal{A}(\cdot)$ a differential operator with respect to the space variables -- involving either ordinary or fractional derivatives, and $\mathcal{B}(\cdot)$ the relevant boundary conditions.  The technique used to reduce partial differential equations to (large) ordinary differential equations is known as method of lines. In particular, the so-called \textit{longitudinal method of lines} separates the problem of discretization in space from the problem of evolution in time by using the intermediate step in which one discretizes only in space but maintains continuous time. Thus, the resulting semi-discrete problem is an initial value problem  of the form~\eqref{eq:theproblem}.

Several integration strategies start from the rewriting of~\eqref{eq:theproblem} as a first--order system. By introducing the  variable $\mathbf{z}(t) = [\mathbf{y}(t),\mathbf{y}'(t)]^T$ we are able to transform the second-order differential problem into the following system of first-order 
\[
\mathbf{z}'(t) + B\mathbf{z}(t) = \mathbf{g}(t), \quad \mathbf{z}(t_0) = \mathbf{z}_0
\]
where, for $O$ the all-zeros matrix and $I$ the identity matrix,
\[
B=\begin{bmatrix}
O & -I \\
A & O
\end{bmatrix}, \quad \mathbf{g}(t)= \begin{bmatrix}
    \mathbf{0} \\ \mathbf{f}(t)
\end{bmatrix}, \quad \mathbf{z}_0 =\begin{bmatrix}
    \mathbf{y}_0 \\ \mathbf{y}_1
\end{bmatrix}.
\]
Then, the %
solution can be obtained by reading the first row of the two-by-two block formula 
\begin{equation}\label{eq:exponentialintegrator}
\mathbf{z}(t) = \;\exp\left( -(t-t_0) B\right) \mathbf{z}_0   + \int_{t_0}^{t} \exp\left( -(t-\xi) B \right)   \mathbf{g}(\xi)
\,\mathrm{d}\xi.
\end{equation}
This choice is theoretically viable, and has been used many times in combination with Padé expansion for the matrix exponential, e.g., see~\cite{MR448947}. Furthermore, by applying suitable quadrature formulas to~\eqref{eq:exponentialintegrator} one produces the so-called exponential integrators schemes, for which we refer to the review~\cite{MR2652783}. To have efficient evaluations we require routines for computing products of matrix functions times a vector of Krylov-type. Nevertheless, we may have efficiency difficulties.  Indeed, if $A$ is symmetric one needs to work with a matrix that is similar to a skew-symmetric matrix and, as observed in~\cite{https://doi.org/10.48550/arxiv.2206.06909,MR1472203}, polynomial Krylov approaches for the matrix exponential have indeed a slower convergence rate than in the symmetric case. To overcome this drawback in~\cite{https://doi.org/10.48550/arxiv.2206.06909} the proposed approach turns to polynomial Krylov methods with restart, {where another possibility is to use a two-pass version of Lanczos for which there is no need to restart~\cite{MR2357621}, but one must take into account the loss of orthogonality and the need to operate with the transpose of the operator.}

Our approach in this work consists in directly discretizing the problem~\eqref{eq:theproblem} by means of Gautschi-type integrators~\cite{MR138200,MR1715573}; see Section~\ref{sec:integrators}. We focus on  the numerical integration scheme given by
\begin{equation*}
    \mathbf{y}_{n+1} - 2 \mathbf{y}_{n} + \mathbf{y}_{n-1} = h^2 
    \left( \operatorname{sinc } \frac{h \sqrt{A}}{2}  \right)^2
    (-A \mathbf{y}_n + \mathbf{f}_n),
\end{equation*}
with the \textit{unnormalized $\operatorname{sinc}$ function} defined  by
\pgfmathsetmacro\MathAxis{height("$\vcenter{}$")}
\begin{equation} \label{eq:sinc}
    {\displaystyle \operatorname {sinc} r}=   \left\{\begin{array}{cc}
        \displaystyle{\frac {\sin r}{r}} &  \mbox{for } r \neq 0\\
       \vspace{-3mm} &\\
         1& \mbox{for } r = 0
    \end{array} \right. = \begin{tikzpicture}[baseline={(0, 0.9cm-\MathAxis pt)}]
  \begin{axis}[
    height=1.42in,
    axis lines=middle,
    xmin=-13, xmax=13,
    ymin=-0.8, ymax=1.3,
    xtick=\empty,
    ytick={1},
    extra y tick style={
      tick label style={anchor=west, xshift=3pt},
    },
    function line/.style={
      red,
      thick,
    },
    single dot/.style={
      red,
      mark=*,
    },
  ]
    \addplot[function line, samples = 50, domain=-12.7:12.7] {sin(deg(x))/x};
    \addplot[single dot] coordinates {(0, 1)};
  \end{axis} 
  \end{tikzpicture}. 
\end{equation}
The novelty contained here is that of using a rational approximation formula for~\eqref{eq:sinc} to instead apply a rational Krylov-like approximation method with suitable choices of poles; see Sections~\ref{sec:rat-krylov} and~\ref{sec:rational-approx}. {Other approaches in the literature based on the use of polynomial Krylov methods are available, see~\cite{MR2235388} where the cosine formulation of the Gautschi scheme is used in combination with a polynomial Krylov method to integrate a finite element discretization of the Maxwell equation.}

Then, in Section~\ref{sec:mol-for-wave} we discuss the application of the whole strategy to a finite element discretization of the wave equation. Finally, in Section~\ref{sec:numerical-experiments} we perform a numerical exploration of the proposed approach.

\subsection{Notation} To simplify the reading of the paper, we report some notations we adopt throughout the paper. 
With $\|\cdot\|$ we indicate the vector and matrix $2$-norm, while with $\|\cdot\|_\Sigma$ we denote the function uniform norm (or sup norm) over the set $\Sigma$, i.e., $\|f\|_\Sigma = \sup \{ |f(s)|\,:\,s \in \Sigma \}$. 
For a generic matrix~$A$, we denote the field-of-values of $A$ by
\[
W(A) = \{ z \in \mathbb{C} \,: z = \mathbf{v}^H A \mathbf{v}, \; \mathbf{v} \in \mathbb{C}^n, \,\|\mathbf{v}\|=1\}.
\]

\section{Gautschi-type methods}
\label{sec:integrators}
In this section we recall the main features of time-stepping procedures that are generally known as the Gautschi-type methods~\cite{MR138200,MR1715573}. By making the usual position in which $\mathbf{y}_{n}$ is an approximation to the value $\mathbf{y}(t_0 + nh)$, $h = \nicefrac{(t_f - t_0)}{n_t}$, $n=0,1,\ldots,n_t$, one can introduce multi-step methods of a given \emph{trigonometric order} by looking at linear functionals $\mathfrak{L} \in \mathcal{C}^s$ {-~the
linear space of functions having $s$ continuous derivatives~-}of trigonometric order $p$, relative to period $T$, i.e., annihilating all trigonometric polynomials of degree $\leq p$ with period $T$ or, in other terms, such that
\[
\mathfrak{L}1 = \mathfrak{L} \cos\left( r \frac{2\pi}{T} t \right) = \mathfrak{L} \sin\left( r \frac{2\pi}{T} t \right) = 0, \quad r = 1,2,\ldots,p.
\]
In general, we can compare methods of trigonometric order $p$ with methods having algebraic order $2p$~\cite[p.~381]{MR138200}.
Among these are the extrapolation methods  of St\"{o}rmer-type of trigonometric order $p$ and with uniform step size $h$ which take the following form 
\begin{eqnarray}\label{eq:trigonometric-extrapolation}
\mathbf{y}_{n+1} + \alpha_{p,1}(v) \mathbf{y}_n + \alpha_{p,2}(v) \mathbf{y}_{n-1} = h^2 \sum_{\ell=1}^{2p-1} \beta_{p,\ell}(v) \mathbf{y}''_{n+1-\ell}, \qquad (v = \nicefrac{2 \pi h}{T}),
\end{eqnarray}
where the expressions of the $\alpha_{p,\ell}(\cdot)$ and $\beta_{p,\ell}(\cdot)$ power series are themselves in the seminal paper of Gautschi~\cite[Section~5]{MR138200}. 
In some cases they can be expressed in closed form as, for example, when $p=1.$ Indeed in the latter case they are the following 
\begin{equation} \label{eq:coef}
\alpha_{1,1}(v)=-2, \quad \alpha_{1,2}(v)=1, \quad \beta_{1,1}(v) =  ( \operatorname{sinc}{\nicefrac{v}{2}})^2.    
\end{equation}
Therefore, for the continuous problem \eqref{eq:theproblem} we can specify the extrapolation St\"{o}rmer scheme of trigonometric order $1$ as
\begin{equation}\label{eq:lubich_scheme}
    \mathbf{y}_{n+1} - 2 \mathbf{y}_{n} + \mathbf{y}_{n-1} = h^2 \psi(h^2A)(-A \mathbf{y}_n + \mathbf{f}_n),
\end{equation}
for $\psi(v^2)  = ( \operatorname{sinc}{\nicefrac{v}{2}})^2.$ 
Since this is a two-step scheme, we can construct its equivalent one-step formulation  on a staggered-grid as 
\begin{equation*}%
\begin{cases}
\mathbf{v}_{n + \nicefrac{1}{2}} = \mathbf{v}_n + \frac{h}{2} \psi(h^2 A)(-A \mathbf{y}_n + \mathbf{f}_n),\\
\mathbf{y}_{n+1} = \mathbf{y}_n + h \mathbf{v}_{n+\nicefrac{1}{2}},\\
\mathbf{v}_{n+1} = \mathbf{v}_{n+\nicefrac{1}{2}} + \frac{h}{2} \psi(h^2A)(-A\mathbf{y}_{n+1} + \mathbf{f}_{n+1})
\end{cases} \, \mathbf{v}_0 = \sigma(h^2A) \mathbf{y}_1,
\end{equation*}
where $\mathbf{v}_n$ represents the discretization of the velocity   $\mathbf{y}'(t_0+nh)$ and $\sigma(v^2) = \operatorname{sinc } v.$ If we concatenate the last equation coming from the previous time step with the first equation of the subsequent time step and we take as initial guess 
\begin{equation}\label{eq:lubich_scheme_compact_ini}
\mathbf{v}_{\nicefrac{1}{2}} = \sigma(h^2A) \mathbf{y}_1 + \frac{h}{2} \psi(h^2 A)(-A \mathbf{y}_0 + \mathbf{f}_0),
\end{equation}
we can simplify the scheme to
\begin{equation}\label{eq:lubich_scheme_compact}
\begin{cases}
    \mathbf{y}_{n+1} = \mathbf{y}_n + h \mathbf{v}_{n+\nicefrac{1}{2}},  & n \geq 0, \\
    \mathbf{v}_{n+\nicefrac{1}{2}} = \mathbf{v}_{n-\nicefrac{1}{2}} + h \psi(h^2 A)(-A \mathbf{y}_n + \mathbf{f}_n), & n > 0.
\end{cases}
\end{equation}
Therefore, the computational effort that we have to make in case we want to apply the scheme~\eqref{eq:lubich_scheme_compact} is the repeated computation of operations of the same type with {the $\psi$ function from}
\begin{equation}\label{eq:stormerfunctions}
    \psi(z) = \left(\operatorname{sinc} \nicefrac{\sqrt{z}}{2} \right)^2, \quad  %
    \sigma(z) = \operatorname{sinc} \sqrt{z}{,}
\end{equation}
{while the $\sigma$ is used once to calculate the initialisation value for the velocity.}
\begin{remark}\label{remark:stormer-verlet}
For $v \rightarrow 0,$ using \eqref{eq:sinc} it is easy to check that the coefficient $\beta_{1,1} (v)$ in~\eqref{eq:coef} tends to 1.
Therefore, the corresponding method reduces to the usual St\"ormer-Verlet-leapfrog method
\begin{equation} \label{SVL}
    \mathbf{y}_{n+1} - 2 \mathbf{y}_n + \mathbf{y}_{n-1} = h^2 (-A \mathbf{y}_n + \mathbf{f}_n).
\end{equation}
Arguments similar to those above give rise to the simplified scheme 
\[
\begin{cases}
    \mathbf{y}_{n+1} = \mathbf{y}_n + h \mathbf{v}_{n+\nicefrac{1}{2}}, & n \geq 0\\
     \mathbf{v}_{n+\nicefrac{1}{2}} = \mathbf{v}_{n-\nicefrac{1}{2}} + h (-A \mathbf{y}_n + \mathbf{f}_n), & n > 0\\
\end{cases}
\]
with initial guess 
\[
\mathbf{v}_{\nicefrac{1}{2}} = \mathbf{y}_1 + \frac{h}{2} (-A \mathbf{y}_0 + \mathbf{f}_0).
\]
This is the computationally most economic implementation, and numerically more stable than \eqref{SVL}; see  \cite[p. 472]{HNW1993}.
\end{remark}

\section{Rational Krylov methods}
\label{sec:rat-krylov}

An efficient way of performing the computations in \eqref{eq:lubich_scheme_compact_ini}-\eqref{eq:lubich_scheme_compact} is using subspace projection methods. In the following, we denote by $V_{k}$ an orthogonal matrix whose columns $\mathbf{v}_{1},\dots, \mathbf{v}_{k}$ span an arbitrary Krylov subspace $\mathcal{W}_{k}(A,\mathbf{v})$ of dimension~$k$. Then we can obtain an approximation of any $g(A;t)\mathbf{v}$ -- and analogously for any of the functions in~\eqref{eq:stormerfunctions} -- by
\begin{equation}
g(A;t)\mathbf{v} \approx \mathbf{g}_k = V_{k}g(V_{k}^{T}AV_{k};t)V_{k}^{T}\mathbf{v}. \label{eq:generic_krylov_approximation}
\end{equation}
For selecting between different methods of obtaining the approximation \eqref{eq:generic_krylov_approximation} we have to make suitable choices of the projection spaces $\mathcal{W}_{k}(A,\mathbf{v})$. In complete generality, if we can select a set of scalars -- called poles -- $\left\{ \zeta_{1},\dots,\zeta_{k-1}\right\} \subset \overline{\mathbb{C}}$ (the extended complex plane), that are not eigenvalues of~$A$, then we can define the polynomial
\begin{equation*}
q_{k-1}(z)=\prod_{j=1}^{k-1}(\zeta_{j}-z),
\end{equation*}
and consider as $\mathcal{W}_{k}(A,\mathbf{v})$ the rational Krylov subspace of order $k$ associated with $A$, $\mathbf{v%
}$ and $q_{k-1}$ defined by
\begin{equation}\label{eq:unseful_krylov}
\mathcal{Q}_{k}(A,\mathbf{v})=\left[ q_{k-1}(A)\right] ^{-1}\mathcal{%
K}_{k}(A,\mathbf{v}),
\end{equation}
for
\begin{equation*}
\mathcal{K}_{k}(A,\mathbf{v})=\operatorname{Span}\{\mathbf{v},A\mathbf{v}%
,\ldots ,A^{k-1}\mathbf{v}\}
\end{equation*}
the standard polynomial Krylov space. {The methods we consider here are rational, that is, they require the solution of a number of linear systems proportional to the number of poles chosen per step. If we compare them with methods based on polynomial approximation, which instead require only the use of matrix-vector products, these are therefore significantly more expensive. For there to be an advantage two conditions must be satisfied, first, the rational approximation must be able to capture the behavior of the function with a small number of poles---in other words, we must have the solution of few linear systems per step---second the solution of the required linear systems must be efficient. This is possible in several cases, for example when we have matrices that come from the discretization of elliptic operators, for which efficient multigrid methods exist and can be used both as solvers and as preconditioners in an inner-outer Krylov scheme, for other cases it is possible to use specialized direct methods~\cite{MR4712819,MR4266513}. Another case where there is usually some margin is when a high accuracy on the final solution is required. Due to the better approximation properties of rational functions compared to polynomials, this is usually the case. On the other hand, to obtain lower precision it is usually much more efficient to employ polynomial-type methods.}

The crucial point of the entire procedure is therefore the choice of the appropriate poles for the pair function and matrix under consideration. The principle to be guided by in the choice is that of the expression of the error committed using the approximation~\eqref{eq:generic_krylov_approximation} for a given Krylov space/set of poles.
\begin{theorem}[Near optimality, \cite{MR3095912,MR3666309}]\label{thm:Crouzeix}
Let $g$ be analytic in a neighborhood of a compact set $\Sigma \supseteq W(A)$ the field-of-value of $A$. Then the rational Krylov approximation~\eqref{eq:generic_krylov_approximation} defined using the space~\eqref{eq:unseful_krylov} satisfies 
\[
\| g(A;t)\mathbf{v} - \mathbf{g}_k \| \leq 2 C \|\mathbf{v}\| \min_{ r_k \in \mathbb{P}_{k-1}/q_{k-1} } \| g - r_k \|_\Sigma,
\]
with a constant $2 \leq C \leq 1 + \sqrt{2}$. If $A$ is Hermitian, the result holds even with $C = 1$ and $\Sigma \supseteq \Lambda(A) \cup \Lambda(V_k^T A V_k)$, for $\Lambda(\cdot)$ the spectrum.
\end{theorem}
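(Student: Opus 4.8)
The plan is to reduce the statement to two ingredients: the \emph{exactness} of the rational Krylov projection on the class $\mathbb{P}_{k-1}/q_{k-1}$, and a Crouzeix--Palencia-type bound for matrix functions over the field of values. Throughout I write $H_k = V_k^T A V_k$ for the compression of $A$ onto $\mathcal{Q}_k(A,\mathbf{v})$, so that the approximation~\eqref{eq:generic_krylov_approximation} reads $\mathbf{g}_k = V_k\, g(H_k;t)\, V_k^T\mathbf{v}$.

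First I would establish the reproduction property: for every $r_k \in \mathbb{P}_{k-1}/q_{k-1}$ one has $r_k(A)\mathbf{v} = V_k\, r_k(H_k)\, V_k^T\mathbf{v}$. The key observations are that $\mathbf{v} \in \mathcal{Q}_k(A,\mathbf{v})$ --- since $\mathbf{v} = [q_{k-1}(A)]^{-1}q_{k-1}(A)\mathbf{v}$ with $q_{k-1}(A)\mathbf{v} \in \mathcal{K}_k(A,\mathbf{v})$ --- so that $V_k V_k^T \mathbf{v} = \mathbf{v}$, and that applying $[q_{k-1}(A)]^{-1}$ times a polynomial of degree $\le k-1$ maps $\mathbf{v}$ back into $\mathcal{Q}_k$. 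This is the classical exactness of rational Krylov spaces for functions of the prescribed pole structure, and it is the structural heart of the argument.

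Next, for an arbitrary $r_k$ in that class I would insert it into the error and telescope, using exactness to cancel the rational part on both the $A$ side and the $H_k$ side:
\[
g(A;t)\mathbf{v} - \mathbf{g}_k = (g-r_k)(A)\mathbf{v} - V_k\,(g-r_k)(H_k)\,V_k^T\mathbf{v}.
\]
Taking norms and using $\|V_k\| = \|V_k^T\| = 1$ gives
\[
\|g(A;t)\mathbf{v} - \mathbf{g}_k\| \le \big( \|(g-r_k)(A)\| + \|(g-r_k)(H_k)\| \big)\,\|\mathbf{v}\|.
\]
Here I would note that $W(H_k) \subseteq W(A) \subseteq \Sigma$, since for a unit vector $\mathbf{u}\in\mathbb{C}^k$ the Rayleigh quotient $\mathbf{u}^H H_k \mathbf{u} = (V_k\mathbf{u})^H A (V_k\mathbf{u})$ lies in $W(A)$ and $\|V_k\mathbf{u}\|=1$. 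Applying the Crouzeix--Palencia inequality $\|f(M)\| \le C\|f\|_{W(M)}$ to each of the two terms, with $g-r_k$ analytic on a neighborhood of $\Sigma$, and bounding $\|g-r_k\|_{W(A)}, \|g-r_k\|_{W(H_k)} \le \|g-r_k\|_\Sigma$, yields $\|g(A;t)\mathbf{v}-\mathbf{g}_k\| \le 2C\,\|\mathbf{v}\|\,\|g-r_k\|_\Sigma$. Minimizing over $r_k$ closes the general bound, with $C$ the Crouzeix constant, known to satisfy $2 \le C \le 1+\sqrt{2}$.

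For the Hermitian case, $A$ and hence $H_k$ are Hermitian, thus normal, and the spectral theorem gives $\|f(A)\| = \max_{\lambda\in\Lambda(A)}|f(\lambda)|$ and $\|f(H_k)\| = \max_{\lambda\in\Lambda(H_k)}|f(\lambda)|$ exactly; replacing the field of values by the spectra lets one take $C=1$ and $\Sigma \supseteq \Lambda(A)\cup\Lambda(V_k^T A V_k)$, while the factor $2$ persists from the two error contributions. The main obstacle I anticipate is the careful verification of the exactness property --- ensuring the projected quantity is computed with the correct compressed matrix and that reproduction holds for the full rational class $\mathbb{P}_{k-1}/q_{k-1}$, not merely for polynomials; the Crouzeix--Palencia bound itself is invoked as an external deep result.
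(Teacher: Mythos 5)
The paper never proves this theorem: it is stated as a quotation from the cited references (Güttel's near-optimality result combined with the Crouzeix--Palencia bound), so there is no internal proof to compare against. Your reconstruction is correct and is essentially the standard argument from those references: rational-Krylov exactness on $\mathbb{P}_{k-1}/q_{k-1}$, the telescoping decomposition $g(A;t)\mathbf{v}-\mathbf{g}_k=(g-r_k)(A)\mathbf{v}-V_k(g-r_k)(H_k)V_k^T\mathbf{v}$, the inclusion $W(V_k^TAV_k)\subseteq W(A)\subseteq\Sigma$, the Crouzeix--Palencia inequality applied twice (whence the factor $2C$ with $2\le C\le 1+\sqrt 2$), and the spectral theorem in place of Crouzeix--Palencia when $A$ is Hermitian, which yields $C=1$ with $\Sigma\supseteq\Lambda(A)\cup\Lambda(V_k^TAV_k)$. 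The one thin spot, which you correctly flag yourself, is that the exactness lemma is asserted rather than proved; the clean way to close it is the single-pole identity --- if $(\zeta I-A)^{-1}\mathbf{v}\in\operatorname{range}(V_k)$ and $\zeta\notin\Lambda(H_k)$, then $V_k^T\mathbf{v}=(\zeta I-H_k)V_k^T(\zeta I-A)^{-1}\mathbf{v}$, so $(\zeta I-A)^{-1}\mathbf{v}=V_k(\zeta I-H_k)^{-1}V_k^T\mathbf{v}$ --- followed by a partial-fraction decomposition of $r_k$ (confluent poles by continuity); this also exposes the implicit well-posedness assumption that $q_{k-1}(H_k)$ is invertible, which is guaranteed when the poles lie outside $W(A)$.
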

The goal is therefore to choose the poles so as to obtain the best rational approximation of the function $g$ on the set $\Sigma$. For this purpose, a possible choice to obtain an upper bound is to fix a rational approximation for the function sought and to choose the poles of this approximation as poles of the method. With this in mind, in the next section we deal with obtaining these approximations in different ways.

\section{Four approximations to the \texorpdfstring{$\operatorname{sinc}$}{sinc} function}
\label{sec:rational-approx}

The heart of the approach is therefore that of finding an approximation for the $\operatorname{sinc}$ function~\eqref{eq:sinc} to either directly approximate the matrix function-vector product or to determine the poles $\{\zeta_j\}_j$ to build the Krylov space~\eqref{eq:unseful_krylov} and the related approximation~\eqref{eq:generic_krylov_approximation}. Such results can be obtained by using the expression of the $\operatorname{sinc}$ function in terms of the confluent hypergeometric function (Section~\ref{sec:rational-approx:pade}) or by inverting its Fourier transform (Section~\ref{sec:fourier-inversion}). While a closed form expression of the diagonal Padé approximant of the $\operatorname{sinc}$ function exist, see~\cite[p. 367]{MR367516}, it involves the computation of determinants of matrices whose entries are binomial coefficients. This task requires symbolical manipulations and does not produce an expression of the approximation error; we give a tabulation of few of them in Appendix~\ref{sec:sinc-pade-poles}.

\subsection{Padé-type approximants}\label{sec:rational-approx:pade}
For the following analysis we need to introduce the confluent hypergeometric function of the first kind. This can be defined by the generalized hypergeometric series  
\begin{equation*}%
    \Hypergeometric{1}{1}{a}{b}{x} = \sum_{n=0}^{+\infty} \frac{(a)_n}{(b)_n} \frac{x^n}{n!},
\end{equation*}
 where, as usual,  $(w)_s=w(w+1) \cdots (w+s-1)$ denotes the Pochhammer symbol. It is straightforward to verify that when $b=a$ this function coincides with the exponential function, i.e.,
 \begin{equation} \label{eq:expF11}
   e^x =\Hypergeometric{1}{1}{a}{a}{x}.
 \end{equation}
 Now, by virtue of the fact that 
 \begin{equation} \label{eq:sincexp}
    \operatorname{sinc} z = 
\frac{\sin z }{z}= \frac{e^{-iz}-e^{iz}}{-2iz},
\end{equation}
we may deduce a first rational approximation for the $\operatorname{sinc}$ function by the Padé approximants for the exponential.
Similarly, this can be done by considering 
the Padé approximants for the $\displaystyle \prescript{}{1}{F}_{1}\left(1;2 ; \cdot \right)$.  
In fact, we known that if $\mbox{Re } b > \mbox{Re } a > 0,$ the confluent hypergeometric function can be represented as an integral (see, e.g., \cite[eq. (1) p. 255]{Erdelyi-vol1})
\[
\Hypergeometric{1}{1}{a}{b}{x} = 
{\frac {\Gamma (b)}{\Gamma (a)\Gamma (b-a)}}\int _{0}^{1}e^{x t}t^{a-1}(1-t)^{b-a-1}\,dt.
\]
Setting $a=1,$ $b=2$ and recalling that for every positive integer $n,  \Gamma(n)=(n-1)!,$ we obtain
\begin{equation*} \label{phi1}
    \Hypergeometric{1}{1}{1}{2}{x} = 
\int _{0}^{1}e^{x t} \,dt 
= \frac{e^x-1}{x}.
\end{equation*}
Choosing $x=-2iz,$ from the previous and~\eqref{eq:sincexp} we get
\begin{equation} \label{sinF11}
    e^{iz} \Hypergeometric{1}{1}{1}{2}{-2iz} =  e^{iz} \frac{e^{-2iz}-1}{-2iz} =  \frac{e^{-iz}-e^{iz}}{-2iz}  = \operatorname{sinc} z.
\end{equation}
Alternatively, setting $x=\pm iz,$   we have
\begin{equation} \label{sinF11bis}
    \frac{1}{2} \Bigl(\Hypergeometric{1}{1}{1}{2}{iz}+\Hypergeometric{1}{1}{1}{2}{-iz} \Bigl)  = \operatorname{sinc} z.
\end{equation}
In all these three cases, rational approximations to the $\operatorname{sinc}$ function can be determined by using the results provided by Luke in \cite{Luke}  which concern the $[n/n]$-Padé approximant of $\displaystyle {}_{1}F_{1 }\left(1;b ;-x \right).$ As for the remainder, since the confluent hypergeometric functions can be linked to the so-called $\varphi$-functions by
\[
\displaystyle {}_{1}F_{1 }\left(1;j+1 ;x \right)= j! \,\varphi_j(x), \qquad j=0,1,2,\dots,
\]
we can use the error estimate deriving from the diagonal Padé approximant to $\varphi_j(x)$ and given in \cite[Lemma 2]{MR2492291} which is sharper than the one reported in \cite{Luke}.

\subsubsection{Padé approximants for the exponential function} \label{sec:exponential}
As already mentioned in \eqref{eq:expF11}, the exponential function can be expressed in terms of a confluent hypergeometric function as follows
\[
e^{-x} = \displaystyle {}_{1}F_{1 }\left(1;1 ;-x \right).
\]
Therefore, from \cite[eqs. (13)--(15) p. 15]{Luke} we can readily obtain its $[n/n]$-Padé approximant, i.e., 
\begin{equation} \label{eq:padexp}
     e^{-x} = \frac{G_n(-x)}{G_n(x)}  + S_n(x),
\end{equation}
where 
\begin{equation}\label{eq:gnfunction}
    G_n(x) = \frac{(2n)!}{n!} \displaystyle {}_{1}F_{1 }\left(-n;-2n ;x \right) 
\end{equation}
and the remainder is given in terms of modified Bessel functions
\begin{equation*}
S_n(x) = (-1)^{n+1} \pi e^{-x} \frac{I_{n+\nicefrac{1}{2}} (\nicefrac{x}{2})}{K_{n+\nicefrac{1}{2}} (\nicefrac{x}{2})} =
\frac{(-1)^{n+1} {(n!)^2}}{(2n)! (2n+1)!} x^{2n+1}
 + \mathcal{O} (x^{2n+2}).
\end{equation*}
\begin{remark}
Now, it is worth {observing} that the generalized Laguerre polynomial of degree $n$ is defined by suitable confluent hypergeometric function as follows (see, e.g., \cite[Eq.~(13.6.19)]{NIST:DLMF})
\begin{equation} \label{eq:pol-laguerre}
    {\displaystyle L_{n}^{(\alpha )}(x)\triangleq \binom{n+\alpha}{n} \Hypergeometric{1}{1}{-n}{\alpha+1}{x}}.
\end{equation}
While for $\alpha > - 1$ the $\{\displaystyle L_{n}^{(\alpha )}(x)\}_n$ are orthogonal polynomials on $[0,+\infty)$ with respect to the weight $x^\alpha e^{-x},$ for $\alpha \in \mathbb{C}$ this is no longer true{. That is,} they are no longer orthogonal and posses simple complex zeros; see, e.g., the discussion in~\cite{MR1858305}. This is the case of interest in the following computations.
\end{remark}
Then, we can express~\eqref{eq:gnfunction} as
\[
G_n(x) = \frac{(2n)!}{n!} \binom{-n-1}{n}^{-1} {\displaystyle L_{n}^{(-2n-1 )}(x)}.
\]
Consequently, \eqref{eq:padexp} becomes
\begin{equation*} \label{phi0}
    e^{-x} = \frac{\displaystyle L_{n}^{(-2n-1 )}(-x)}{\displaystyle L_{n}^{(-2n-1 )}(x)} + S_n(x).
\end{equation*}
This relation together with \eqref{eq:sincexp} gives rise to our first rational approximation for the $\operatorname{sinc}$ function 
\begin{eqnarray} 
    \operatorname{sinc} z %
    &=& -\frac{1}{2iz} \frac{\left(\displaystyle L_{n}^{(-2n-1 )}(-iz) \right)^2 - \left(\displaystyle L_{n}^{(-2n-1 )}(iz) \right)^2 }{\displaystyle L_{n}^{(-2n-1 )}(iz)\displaystyle L_{n}^{(-2n-1 )}(-iz)}
    + \hat S_n(z), \label{eq:sinc-exp-approximation}
\end{eqnarray}
where the remainder is given by
\begin{equation}
\begin{split}
\hat S_n(z) \triangleq & \frac{S_n(iz) -  S_n(-iz)}{-2iz} \approx \frac{n! n! }{(2n)! (2 n+1)!}z^{2 n} . \label{eq:sinc-exp-approximation-bound}
\end{split}
\end{equation}
Therefore, for each value of $n,$ the poles can be chosen as the zeros  of the polynomial $\displaystyle L_{n}^{(-2n-1 )}(iz)$ and its conjugate plus $z=0$, i.e.,
\[
\mathcal{E}_n = \{ \zeta \in \mathbb{C} \,:\, L_{n}^{(-2n-1 )}(i\zeta) = 0 \,\lor\,L_{n}^{(-2n-1 )}(-i\zeta) = 0 \} \cup \{0\}.
\]
We compare in Figure~\ref{fig:comparison_of_rational_approx0} the rational approximation in~\eqref{eq:sinc-exp-approximation} with the one obtained determining in a symbolic way the Padé expansion of the $\operatorname{sinc}$ function (see Appendix~\ref{sec:sinc-pade-poles}) and observe a good agreement of the two approximations. 

\begin{figure}[htbp]
    \centering
    \subfloat[Approximation~\eqref{eq:sinc-exp-approximation} with $n=5$.]{\includegraphics[width=0.45\columnwidth]{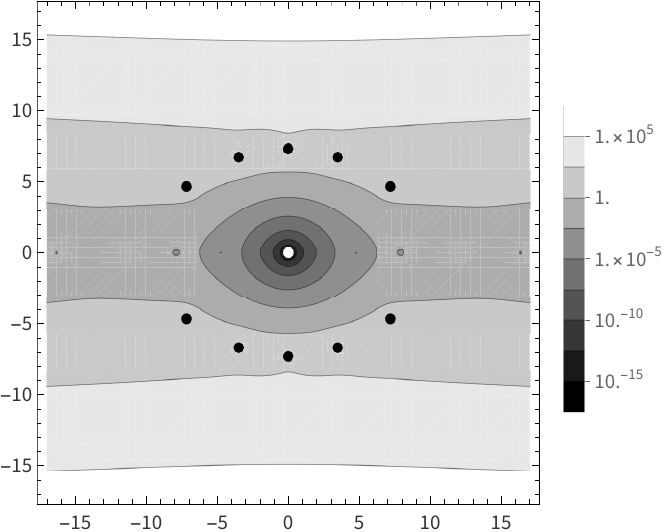}}
    \subfloat[{$[10/10]$}-Padé approximation\label{fig:reference-pade}]{\includegraphics[width=0.45\columnwidth]{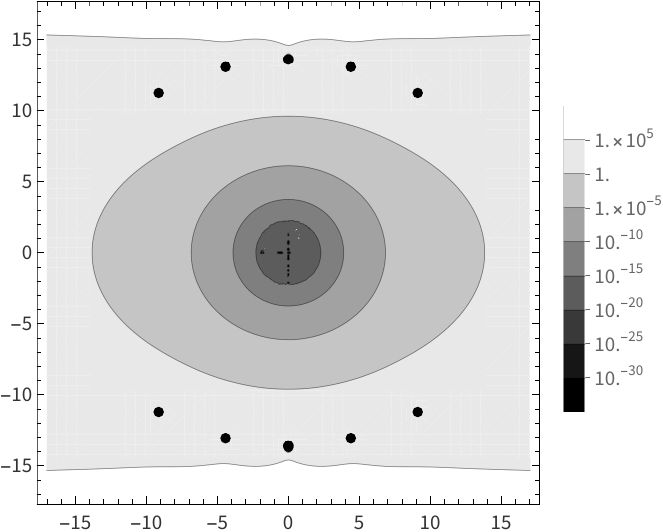}}
    \caption{Absolute error for the approximation~\eqref{eq:sinc-exp-approximation} and for the $[10/10]$-Padé approximation of the $\operatorname{sinc}$ function computed numerically in the $[-17,17]\times[-17i,17i]$ region. The color-map is $\log_{10}$-scale and the bold black dots represent the poles in the two cases; for the approximation~\eqref{eq:sinc-exp-approximation} the pole in $0$ is denoted in white to make it visible against its surrounding.}
    \label{fig:comparison_of_rational_approx0}
\end{figure}

\subsubsection{Padé approximants for \texorpdfstring{$\Hypergeometric{1}{1}{1}{2}{\cdot}$}{}} \label{sec:confhyperfunc}
Following the results given by Luke in \cite[Section 3]{Luke} and again the higher order term that can be obtained from \cite[Lemma 2]{MR2492291}, we can easily obtain the $[n/n]$-Padé approximant to the function $\displaystyle \prescript{}{1}{F}_{1}\left(1;2 ;-x \right).$ Thus, we write 
\begin{eqnarray} \label{eq:pade-gen}
   \Hypergeometric{1}{1}{1}{2}{-x} = \frac{\mathcal{A}_n(x)}{\mathcal{B}_n(x)}  + R_n(x),
\end{eqnarray}
where 
 \begin{eqnarray}
 {\mathcal{A}_n(x)} &{=}& {x^n\frac{\Gamma(n+2)}{\Gamma(2n+2)}\sum_{k=0}^n \frac{(-n)_k(n+2)_k }{(2)_k k!} \Hypergeometric{3}{1}{-n+k,n+2+k,1}{1+k}{-x^{-1}}},\nonumber   \\
 \mathcal{B}_n(x)   &=&  x^n\frac{\Gamma(n+2)}{\Gamma(2n+2)}  \Hypergeometric{2}{0}{-n,n+2}{--}{-x^{-1}}, \label{qnpade} \\
 R_n(x) &=& (-1)^{n+1} \left(\frac{n!}{\sqrt{2}(2n+1)!}\right)^2 x^{2n+1} + \nonumber \\ &\phantom{=}& (-1)^{n+1} \frac{n+1}{2n+3} \left(\frac{n!}{\sqrt{2}(2n+1)!}\right)^2  x^{2n+2} + \mathcal{O} (x^{2n+3}). \label{eq:resto}
\end{eqnarray}
{Note that the explicit form of the polynomial $\mathcal{A}_n(x)$ is useless for our analysis, and is not needed for the computation; we reported it for the sake of completion.} Since we are interested in understanding which are the zeros of the denominator of this $[n/n]$-Padé approximant, we now focus on $\mathcal{B}_n(x).$ By virtue of the fact that (see \cite[eq. (3) p. 257]{Erdelyi-vol1})
\[
\Hypergeometric{2}{0}{-n,n+2}{--}{-x^{-1}} = x^{-n} \Psi(-n,-2n-1,x)
\]
we can write
\[
\mathcal{B}_n(x)  =\frac{\Gamma(n+2)}{\Gamma(2n+2)} \Psi(-n,-2n-1,x).
\]
Using \cite[eq. (7) p. 257]{Erdelyi-vol1} we find
\[
 \Psi(-n,-2n-1,x) = \frac{\Gamma(2n+2)}{\Gamma(n+2)} \Hypergeometric{1}{1}{-n}{-2n-1}{x}.
\]
Consequently,
\begin{equation*} %
    \mathcal{B}_n(x)  =\displaystyle \prescript{}{1}{F}_{1}\left(-n;-2n-1 ;x\right).
\end{equation*}
Taking into account the formula \eqref{eq:pol-laguerre}
we have
\[
    \mathcal{B}_n(x) =   \binom{-n-2 }{n}^{-1}
    {\displaystyle L_{n}^{(-2n-2 )}(x)}. 
\]
Finally, noting that
\[
 \binom{-n-2}{n} = (-1)^n  \binom{2n+1}{n}
\]
it is immediate to get 
\begin{equation*} %
    \mathcal{B}_n(x) =  (-1)^n  \binom{2n+1}{n}^{-1} {\displaystyle L_{n}^{(-2n-2 )}(x)}. 
\end{equation*}
Therefore, the above equation together with  \eqref{eq:pade-gen} and \eqref{eq:resto} leads to the following rational approximation (see \eqref{sinF11})
\begin{equation}\label{eq:approx-pade-laguerre}
   \operatorname{sinc } z = (-1)^n  \displaystyle \binom{2n+1}{n} \frac{\mathcal{A}_n(2iz)}{ {\displaystyle L_{n}^{(-2n-2 )}(2iz)}}  e^{iz} + \hat R_n(z),
\end{equation}
where
\begin{equation}\label{eq:approx-pade-laguerre-bound}
\begin{split}
   \hat R_n(z) \triangleq & R_n(2iz)  e^{iz} \approx  - i  \left(\frac{2^{n} n!}{(2n+1)!}\right)^2 z^{2 n+1} e^{i z}.
\end{split} 
\end{equation}
Given this rational approximation, the poles can then be chosen as the zeros of the polynomials $L_{n}^{(-2n-2 )}(2iz)$ for different values of $n$, i.e., the set
\[
\mathcal{L}_{n} = \{ \zeta \in \mathbb{C}:\, L_n^{(-2n-2)}(2i\zeta) = 0 \}.
\]
In Figure~\ref{fig:comparison_of_rational_approx1} we compare the rational approximation in~\eqref{eq:approx-pade-laguerre} with the one obtained determining in a symbolic way the Padé expansion of the $\operatorname{sinc}$ function; we observe a good agreement between the two. 
\begin{figure}[htbp]
    \centering
    \subfloat[Approximation~\eqref{eq:approx-pade-laguerre} with $n=10$.]{\includegraphics[width=0.45\columnwidth]{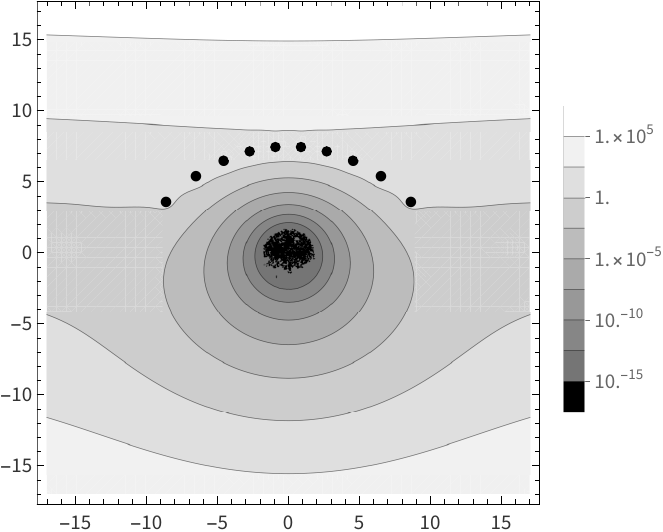}}
    \subfloat[{$[10/10]$}-Padé approximation]{\includegraphics[width=0.45\columnwidth]{PadeRationalApprox}}
    
    \caption{Absolute error for the approximation~\eqref{eq:approx-pade-laguerre} and for the $[10/10]$-Padé approximation of the $\operatorname{sinc}$ function computed numerically in the $[-17,17]\times[-17i,17i]$ region. The color-map is $\log_{10}$-scale and the bold black dots represent the poles.}
    \label{fig:comparison_of_rational_approx1}
\end{figure}
Nevertheless, we discover a lack of symmetry of the poles obtained from the expansion \eqref{eq:approx-pade-laguerre}, symmetry that should be inherited from the parity of the $\operatorname{sinc}$ function. This does not happen when rewriting the $\operatorname{sinc}$ function as in~\eqref{sinF11bis}. Indeed, a repeated application of~\eqref{eq:pade-gen}-\eqref{eq:resto} leads to
\begin{equation}\label{eq:approx-pade-laguerre1}
\begin{split}
   \operatorname{sinc } z = & \frac{(-1)^n}{2}  \displaystyle \binom{2n+1}{n}  \frac{\mathcal{A}_n(iz)  {\displaystyle L_{n}^{(-2n-2 )}(-iz)} + \mathcal{A}_n(-iz)  {\displaystyle L_{n}^{(-2n-2 )}(iz)}}{ {\displaystyle L_{n}^{(-2n-2 )}(iz) \, \displaystyle L_{n}^{(-2n-2 )}(-iz)}} + \tilde{R}_n(z), 
\end{split}
\end{equation}
with
\begin{equation}\label{eq:approx-pade-laguerre1-err}
\begin{split}
   \tilde{R}_n(z) = & -\frac{ ((n+1)!)^2 }{(2 n+1)! (2 n+3)!}z^{2 n+2}+ \mathcal{O}(z^{2n+3}).
\end{split}
\end{equation}
This suggests considering as poles the set
\[
\overline{\mathcal{L}}_{n} =  \{ \zeta \in \mathbb{C} \,:\, L_n^{(-2n-2)}(i\zeta) = 0 \,\lor\, 
L_n^{(-2n-2)}(-i\zeta) = 0 \}.
\]

A depiction of the scalar bounds for these rational approximations can be found in Appendix~\ref{sec:appendix-bound}. 

We conclude this section by applying the previous analysis to the matrix case and summarizing it in the following result.
\begin{proposition}\label{pro:how-many-poles}
Let $A$ be a symmetric and positive semi-definite matrix and $L_{n}^{(\alpha)}(x)$ the generalized Laguerre polynomial of degree $n$ defined in \eqref{eq:pol-laguerre}. Then, the following rational approximations to the $\operatorname{sinc}$ matrix function  can be derived:
\begin{itemize}
    \item[(i)] using the Padé approximant to the exponential function and
setting 
\[
P_n(A):=\left( L_{n}^{(-2n-1 )}(iA)\right)^{-1}  L_{n}^{(-2n-1 )}(-iA),
\]
from \eqref{eq:sinc-exp-approximation} we have 
\[
\operatorname{sinc}(A) \approx  -\frac{1}{2i} A^{-1}\left[ P_n(A) - (P_n(A))^{-1}\right]:=E_n(A),
\]
with the following bound for the error:
\begin{eqnarray} 
    \|\operatorname{sinc}(A) -E_n(A)\| & \le &  %
    2 \left\| (2n+1) \left(\frac{ \, n!}{(2n+1)!}\right)^2
    z^{2 n} \right\|_\Sigma; \label{eq:sinc-exp-approximation-mat} %
\end{eqnarray}
\item[(ii)] using the Padé approximant for 
\texorpdfstring{$\Hypergeometric{1}{1}{1}{2}{-2iz}$}{}, from  \eqref{eq:approx-pade-laguerre} we have 
\[
\operatorname{sinc}(A) \approx  (-1)^n  \displaystyle \binom{2n+1}{n} \mathcal{A}_n(2iA) \left(L_{n}^{(-2n-2 )}(2iA) \right)^{-1} e^{iA}:=F_n(A),
\]
with the following bound for the error:
\begin{eqnarray} 
    \|\operatorname{sinc}(A) -F_n(A)\| & \le &  %
    2 \left\| 2^{2n} \left(\frac{ n!}{(2n+1)!}\right)^2 z^{2 n+1} \right\|_\Sigma. %
    \label{eq:approx-pade-laguerre-mat}
\end{eqnarray}
For the symmetrized version from~\eqref{eq:approx-pade-laguerre1} we call $\Tilde{F}_n(A)$ the evaluation of the approximation in $A$ thus obtaining
\begin{equation}\label{eq:approx-pade-laguerre1-mat}
\|\operatorname{sinc}(A) -\Tilde{F}_n(A)\| \leq  2 \left\| \frac{n+1}{4n+6} \left(\frac{ n!}{(2n+1)!}\right)^2 z^{2 n+2} \right\|_\Sigma.
\end{equation}
\end{itemize}
\end{proposition}

\begin{proof}
From Theorem~\ref{thm:Crouzeix}, the bounds are essentially the scalar bounds in the equations~\eqref{eq:sinc-exp-approximation-bound} for~\eqref{eq:sinc-exp-approximation}, \eqref{eq:approx-pade-laguerre-bound} for~\eqref{eq:approx-pade-laguerre}, and~\eqref{eq:approx-pade-laguerre1-err} for~\eqref{eq:approx-pade-laguerre1} respectively, a part from a multiplicative factor $2 C = 2$ due to $A$ being symmetric. 
\end{proof}

It should be noted that, given the simplicity of computing the numerators and denominators of the rational approximations discussed here, it is also feasible to use them directly for the computation of the product $\operatorname{sinc}(A)\mathbf{v}$. This costs the same number of solutions of linear systems with shifted coefficient matrix as the case of the rational Krylov method, however with the same right-hand term and -~in principle~- the possibility of solving them simultaneously.

\subsection{Exponential sums from the inverse Fourier transform}\label{sec:fourier-inversion}
We recall that the Fourier transform of the $\operatorname{sinc}(r)$ function can be expressed~as
\[
\mathcal{F} \left\lbrace \operatorname{sinc}(r) \right\rbrace(k) = \frac{1}{\sqrt{2 \pi }}\int _{-\infty }^{\infty } 
\operatorname{sinc} (r) e^{i r k }\, \mathrm{d}r = \frac{1}{2} \sqrt{\frac{\pi }{2}} (\operatorname{sgn}(1-k)+\operatorname{sgn}(1+k)),
\]
where 
\pgfmathsetmacro\MathAxis{height("$\vcenter{}$")}
\[
\operatorname{sgn}(r) = \begin{cases}
    +1, & r > 0,\\
    \phantom{+}0, & r = 0,\\
    -1,& r < 0,
\end{cases} = \begin{tikzpicture}[baseline={(0, 1cm-\MathAxis pt)}]
  \begin{axis}[
    height=1.42in,
    axis lines=middle,
    xmin=-3, xmax=3,
    ymin=-1.5, ymax=1.5,
    xtick=\empty,
    ytick={0, 1},
    extra y ticks={-1},
    extra y tick style={
      tick label style={anchor=west, xshift=3pt},
    },
    function line/.style={
      red,
      thick,
      samples=2,
    },
    single dot/.style={
      red,
      mark=*,
    },
    empty point/.style={
      only marks,
      mark=*,
      mark options={fill=white, draw=black},
    },
  ]
    \addplot[function line, domain=\pgfkeysvalueof{/pgfplots/xmin}:0] {-1};
    \addplot[function line, domain=0:\pgfkeysvalueof{/pgfplots/xmax}] {1};
    \addplot[single dot] coordinates {(0, 0)};
    \addplot[empty point] coordinates {(0, -1) (0, 1)};
  \end{axis}
\end{tikzpicture}.
\]
Thus we can approximate $\operatorname{sinc}(A)\mathbf{v}$ by approximating the integral of the inverse Fourier transform
\[
\operatorname{sinc}(A)\mathbf{v} = \frac{1}{2} \int_{-1}^{1} \exp(-i k A )\mathbf{v}\,\mathrm{d}k,
\]
if we then call $\{ \omega_p,\ell_p \}_{p=1}^{\nu}$ the Gauss-Legendre quadrature points in the interval $[-1,1]$ we can approximate it as the \emph{exponential sum}
\begin{equation}\label{eq:fourierapprox}
    \operatorname{sinc}(A)\mathbf{v} \approx \frac{1}{2}\sum_{p=1}^{\nu} \omega_p \exp(- i \ell_p A )\mathbf{v}.
\end{equation}
Another reasonable way to approximate this integral would be to use instead the {C}lenshaw-{C}urtis quadrature formula; this choice often obtains results comparable to that of Gauss-Legendre, as discussed in~\cite{MR2403058}. In the present case, numerical tests have shown us a better convergence behavior for the Gauss formula. In particular, for the latter we can show the following error bound.
\begin{proposition}\label{pro:sinc-fourier}
The error for the approximation based on~\eqref{eq:fourierapprox} for $A$ symmetric and positive semi-definite is bounded by
\[
\left\| \operatorname{sinc}(A) -  \frac{1}{2}\sum_{p=1}^{\nu} \omega_p \exp(- i \ell_p A ) \right\| \leq \frac{\pi}{(2\nu)!} \left(\frac{\rho(A)}{2}\right)^{2\nu},
\]
for $\rho(A)$ the spectral radius of $A$.
\end{proposition}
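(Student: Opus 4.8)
The plan is to reduce the matrix estimate to a scalar Gauss--Legendre quadrature error and then to recognize the resulting constant as a Wallis partial product. Since $A$ is symmetric and positive semi-definite, it is orthogonally diagonalizable, $A = Q D Q^{T}$ with $D$ diagonal and eigenvalues $\lambda \in [0,\rho(A)]$, and the matrix inside the norm is the primary matrix function $g(A)$ with $g(\lambda)=r_\nu(\lambda)$, where
\[
r_\nu(\lambda) = \operatorname{sinc}(\lambda) - \tfrac{1}{2}\sum_{p=1}^{\nu}\omega_p e^{-i\ell_p\lambda} = \tfrac{1}{2}\Bigl(\int_{-1}^{1} e^{-ik\lambda}\,\mathrm{d}k - \sum_{p=1}^{\nu}\omega_p e^{-i\ell_p\lambda}\Bigr) =: \tfrac{1}{2}E_\nu(h_\lambda),
\]
with $h_\lambda(k)=e^{-ik\lambda}$ and $E_\nu$ the $\nu$-point Gauss--Legendre error functional on $[-1,1]$; here $\operatorname{sinc}(\lambda)=\tfrac{1}{2}\int_{-1}^{1}e^{-ik\lambda}\,\mathrm{d}k$ is the scalar form of the inverse Fourier representation used above. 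By the spectral theorem the $2$-norm on the left equals $\max_{\lambda\in[0,\rho(A)]}|r_\nu(\lambda)|$, so it suffices to bound the scalar quadrature error uniformly in $\lambda$.

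Next I would invoke the classical error theory for Gauss--Legendre quadrature. Because $E_\nu$ annihilates all polynomials of degree $\le 2\nu-1$, the Peano kernel theorem gives $E_\nu(h_\lambda) = \int_{-1}^{1} K(t)\,h_\lambda^{(2\nu)}(t)\,\mathrm{d}t$, and the Gauss--Legendre Peano kernel $K$ is of one sign (positive) on $(-1,1)$. Hence $\int_{-1}^{1}|K| = \bigl|\int_{-1}^{1}K\bigr| = E_\nu(k^{2\nu})/(2\nu)!$, and since $h_\lambda^{(2\nu)}(k) = (-i\lambda)^{2\nu}e^{-ik\lambda}$ has modulus exactly $\lambda^{2\nu}$ on the real axis, one obtains $|E_\nu(h_\lambda)| \le \lambda^{2\nu}\,E_\nu(k^{2\nu})/(2\nu)!$ with no loss coming from the complex-valued integrand. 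Finally $E_\nu(k^{2\nu})$ equals $\int_{-1}^{1}p_\nu(k)^2\,\mathrm{d}k$ for the monic Legendre polynomial $p_\nu$ (because $k^{2\nu}-p_\nu^2$ has degree $\le 2\nu-1$ and $p_\nu$ vanishes at the nodes $\ell_p$), and a standard evaluation gives $\int_{-1}^{1}p_\nu^2 = 2^{2\nu+1}(\nu!)^4/\bigl((2\nu+1)[(2\nu)!]^2\bigr)$.

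Collecting the factors (the $\tfrac12$ from $r_\nu$ combining with $2^{2\nu+1}$) and pulling out $(\lambda/2)^{2\nu}$, the scalar bound becomes $|r_\nu(\lambda)| \le a_\nu\,(\lambda/2)^{2\nu}/(2\nu)!$ with $a_\nu = 2^{4\nu}(\nu!)^4/\bigl((2\nu+1)[(2\nu)!]^2\bigr)$. The concluding step is to recognize that $a_\nu$ is exactly the $\nu$-th partial product of the Wallis product, $a_\nu = \prod_{j=1}^{\nu} (2j)^2/\bigl((2j-1)(2j+1)\bigr)$, which increases monotonically to $\pi/2$; thus $a_\nu < \pi/2 < \pi$ for every $\nu$. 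Maximizing over $\lambda\in[0,\rho(A)]$, so that $\lambda^{2\nu}\le\rho(A)^{2\nu}$, then yields the stated estimate. I expect the main obstacle to be not any single calculation but the rigorous application of the Gauss error bound to a complex integrand: using the sign-definiteness of the Peano kernel together with the exact identity $|h_\lambda^{(2\nu)}|=\lambda^{2\nu}$ avoids spurious factors and keeps the constant sharp enough that the clean value $\pi$ (in fact even $\pi/2$) suffices.
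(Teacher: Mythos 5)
Your proof is correct and follows essentially the same route as the paper's: reduce to a scalar Gauss--Legendre quadrature error via the spectral theorem, bound $\left|\frac{d^{2\nu}}{dk^{2\nu}}e^{-ik\lambda}\right|$ by $\lambda^{2\nu}$, and recognize the resulting constant as a Wallis partial product controlled by $\pi$. The only differences are matters of detail: you derive the classical error constant $\frac{2^{2\nu+1}(\nu!)^4}{(2\nu+1)((2\nu)!)^3}$ from scratch (sign-definite Peano kernel plus the norm of the monic Legendre polynomial) where the paper cites a textbook, and by retaining the factor $\tfrac12$ and invoking the monotone increase of the Wallis partial products (rather than just the limit, as the paper does) you obtain the slightly sharper constant $\pi/2$.
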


\begin{proof}
Since $\exp(-i z k)$ has $2\nu$ continuous derivative in $[-1,1]$, the error of the integral approximation can be expressed as \cite[Chap. 5, Page~146]{10.5555/61926}
\[
Q_\nu = \frac{2^{2\nu+1} (\nu!)^4}{(2\nu+1)( (2\nu)! )^3}\max_{k \in [-1,1]} \left\lvert \frac{d^{2\nu}}{d k^{2\nu} } \exp( -i k z  ) \right\rvert,
\]
that is
\[
Q_\nu \leq \frac{2^{2\nu+1} (\nu!)^4}{(2\nu+1)( (2\nu)! )^3} |z|^{2\nu},
\]
and the statement follows from
\[
\lim_{\nu\to +\infty }\frac{2^{4 \nu+1} (\nu!)^4}{(2 \nu)! (2 \nu+1)!} = \pi. \qedhere
\]
\end{proof}

To compute the sum~\eqref{eq:fourierapprox} we have to compute $\nu$ matrix-exponential times vector products. The situation is analogous to the one encountered in \emph{exponential integrators} for first-order differential equations~\eqref{eq:exponentialintegrator}. To this end we can employ the rational approximation to the exponential function discussed in Section~\ref{sec:exponential} to generate poles for computing the rational Krylov subspace $\mathcal{Q}_k(-A,\mathbf{v})$ (see again the discussion in Section~\ref{sec:rat-krylov}), and then approximate the exponential sum~\eqref{eq:fourierapprox} as
\[
\operatorname{sinc}(A)\mathbf{v} \approx \frac{1}{2}\sum_{p=1}^{\nu} \omega_p V_k \exp( i \ell_p A_k) V_k^T \mathbf{v}, \quad A_k = - V_k^T A V_k,
\]
that can be easily adapted to the computation of~\eqref{eq:stormerfunctions} by discharging the computation of the square roots on the projected matrix $A_k$.

\begin{remark}
Other viable approaches for performing the computation of the matrix-exponential vector products in~\eqref{eq:fourierapprox} could be the polynomial strategy discussed in~\cite{MR2785959} that combines the usage of scaling-and-squaring techniques together with a Taylor expansion of the exponential function, or the rational expansion using the {C}arath\'{e}odory-{F}ej\'{e}r approximation in~\cite{MR2494943}. Nevertheless, since we need to approximate the matrix-vector product with respect to the functions in~\eqref{eq:stormerfunctions} the polynomial approach would require dealing with the $\sqrt{A}$. On the other hand, the {C}arath\'{e}odory-{F}ej\'{e}r rational approximation is designed to be uniformly optimal on negative real axis, while we need it on the imaginary one. This implies that to have sufficient accuracy, we may have to use a large number of poles to widen the region of fast convergence in the complex plane. Both alternatives are implemented in the code distributed with the paper as discussed in Section~\ref{sec:numerical-experiments}.
\end{remark}

The proposed method can be used directly for the computation of the $\sigma(z)$ function in~\eqref{eq:stormerfunctions} for the initial velocity in~\eqref{eq:lubich_scheme}, on the other hand, a slightly adapted version is needed for the computation of the function $\psi(z)$ in~\eqref{eq:stormerfunctions} since we need to compute the matrix-vector product with respect to the square of the $\operatorname{sinc}$ function. This can be done again by inverting the related Fourier transform, in fact:
\[
\mathcal{F} \left\lbrace \operatorname{sinc}^2(r) \right\rbrace(k) = \frac{1}{4} \sqrt{\frac{\pi }{2}} ((k-2) \text{sgn}(k-2)-2 k \text{sgn}(k)+(k+2) \text{sgn}(k+2)),
\]
and thus
\[
\begin{split}
\operatorname{sinc}^2(A)\mathbf{v} = & \frac{1}{8} \left[ \int_{-2}^0 (2 k+4) \exp \left(-i k A\right) \, \mathrm{d}k+\int_0^2 (4-2 k) \exp \left(-i k A\right) \, \mathrm{d}k\right]\mathbf{v}, \\
= & \frac{1}{8} \int_{-2}^{0} (2 k + 4) \left(\exp \left(-i k A \right)\mathbf{v}+\exp \left(i k A \right) \mathbf{v}\right) \, \mathrm{d}k.
\end{split}
\]
By the same procedure, this integral can be approximated combining the Gauss-Legendre quadrature points in the interval $[-2,0]$ with the rational Krylov method based on the Padé poles for the exponential, thus obtaining
\[
\begin{split}
\operatorname{sinc}^2(A)\mathbf{v} \approx & \frac{1}{8} V_k \left( \sum_{p=1}^{\nu} \omega''_p (2 \ell''_p + 4) \exp(i \ell''_p A_k) (V^T_k \mathbf{v})  \right. \\ & + \left. \sum_{p=1}^{\nu} \omega''_p (2 \ell''_{\nu-p+1}+4) \exp(-i \ell''_p A_k) (V^T_k \mathbf{v}) \right), 
\end{split}
\]
for $A_k = -V_k^T A V_k$, and $V_k$ a matrix whose columns span the rational Krylov subspace for $A$ and $\mathbf{v}$ with respect to the rational approximation to the exponential function discussed in Section~\ref{sec:exponential}. Note that to write the previous one we have also exploited the symmetry properties  of the Gauss-Legendre weights and nodes, i.e., that the nodes and weights $\{ \omega'_p,\ell'_p \}_{p=1}^{\nu}$ in $[0,2]$ are such that $\omega'_p = \omega''_p$ and $\ell'_p = - \ell''_{\nu-p+1}$, where $\{ \omega''_p,\ell''_p \}_{p=1}^{\nu}$ are the Gauss-Legendre weights and nodes on the interval~$[-2,0]$.

{We conclude this section by summarizing all the discussed approaches in Table~\ref{tab:summary} with their advantages and disadvantages. All the different choices are implemented in the distributed code and can be tested accordingly.}
\begin{table}[htbp]
    \centering
    \begin{tabular}{ccp{\dimexpr 0.3\linewidth-2\tabcolsep}p{\dimexpr 0.3\linewidth-2\tabcolsep}}
    \toprule
        Approximation & Error & Advantages & Disadvantages  \\
    \midrule
       \eqref{eq:sinc-exp-approximation}  & \eqref{eq:sinc-exp-approximation-mat} & \tabitem $\mathcal{E}_n$ poles easily computable, \newline\tabitem \textit{a-priori} error estimate. & \tabitem requires invertible matrix.\\
       \midrule
       \eqref{eq:approx-pade-laguerre} & \eqref{eq:approx-pade-laguerre-mat} & \tabitem $\mathcal{L}_n$ poles easily computable,\newline\tabitem \textit{a-priori} error estimate. & \tabitem rational approximation is not even.\\
       \midrule
       \eqref{eq:approx-pade-laguerre1} & \eqref{eq:approx-pade-laguerre1-mat} & \tabitem $\overline{\mathcal{L}}_n$ poles easily computable,\newline\tabitem \textit{a-priori} error estimate,\newline\tabitem best error-computational cost ratio. & \\  
       \midrule
       \S\ref{sec:fourier-inversion} & Prop.~\ref{pro:sinc-fourier} &\tabitem same quadrature weights and nodes for~$\operatorname{sinc}\,r$ and $\operatorname{sinc}^2\,r$. &\tabitem error estimate depends on spectral radius \\
    \bottomrule
    \end{tabular}
    \caption{{We collect here the summary information of the different approaches we have introduced. In particular, we briefly summarize the advantages and disadvantages of the different choices.}}
    \label{tab:summary}
\end{table}

\section{A method of lines for the linear wave equation}
\label{sec:mol-for-wave}

Let us consider the acoustic wave equation
\begin{equation}\label{eq:acoustic-wave}
\begin{cases}
u_{tt} - \Delta u = f, & \text{ in } \Omega \times (0,T],\\
u(\mathbf{x},0) = u_0(\mathbf{x}), &  \;\mathbf{x} \in \Omega,\\
u_t(\mathbf{x},0) = v_0(\mathbf{x}), & \;\mathbf{x} \in \Omega,\\
u = g_D, & \text{ on } \Gamma_D \times [0,T],\\
\nabla u \cdot \hat{\mathbf{n}} = g_N, & \text{ on } \Gamma_N \times [0,T],\\
\end{cases}  
\end{equation}
with $T > 0$, $\Omega \subset \mathbb{R}^d$, $d=2,3$, an open bounded domain with Lipschitz continuous boundary $\partial \Omega = \Gamma_D \cup \Gamma_N$, $\Gamma_D \cap \Gamma_N = \emptyset$, and normal vector $\hat{\mathbf{n}}$. To arrive at a system of second-order differential equations of the form~\eqref{eq:theproblem} we apply a space semi-discretization using a Finite Element Method (FEM). Let $H \triangleq \mathbb{L}^2(\Omega)$ be the space square integrable function for which we denote the scalar product as $\langle\cdot,\cdot\rangle$, and the corresponding norm $\|\cdot\|_H$, then we denote~by 
\[
\begin{split}
    \mathbb{H}^1(\Omega) = & \{ v \in H \,:\, \nicefrac{\partial v}{\partial x_i} \in H, \, i=1,\ldots,d \},\\
    \mathbb{H}^{1}_{\Gamma_D}(\Omega) = & \{v \in \mathbb{H}^{1}(\Omega) \,:\, v = 0 \text{ on } \Gamma_D \} \triangleq V,
\end{split}
\]
together with its dual $\mathbb{H}^{-1}_{\Gamma_D}(\Omega) \triangleq V^*$. The semi-discrete weak formulation of~\eqref{eq:acoustic-wave} can therefore be expressed as
\begin{equation}\label{eq:variational-form}
\begin{split}
\langle u_{tt}(\cdot,t),w \rangle + a(u(\cdot,t),w) = \langle f(\cdot,t),w \rangle + \langle g_N,w \rangle, &\qquad \forall w \in V,\\
\langle u(\cdot,0),w \rangle = \langle u_0,w \rangle, &\qquad \forall w \in H,\\ 
\langle u_t(\cdot,0),w \rangle = \langle v_0,w \rangle, &\qquad \forall w \in H, 
\end{split}
\end{equation}
with
\[
a(u,w) = \int_{\Omega} \nabla u \cdot \nabla w \,\mathrm{d}\mathbf{x}, \qquad \forall\,u,w \in V,
\]
where we use the space-time function spaces 
\[
\mathbb{L}^2(0,T;X) = \left\lbrace \phi\,:\,(0,T) \rightarrow X \,\text{s.t.}\, \int_{0}^{T} \|\phi(t)\|_X^2\,\mathrm{d}t < \infty \right\rbrace,
\]
in which we consider any $\phi(t) \triangleq \phi(\cdot,t)$ a function of the space variable only for fixed values of $t$.
We must now fix the spaces of discrete approximation to obtain~\eqref{eq:theproblem} as a Galerkin approximation of the variational formulation~\eqref{eq:variational-form}{; see, e.g., \cite[Chapter 3]{MR3526765}.}

\subsection{FEM approximation spaces}\label{sec:fem-spaces}

For a set $\Omega \subseteq \mathbb{R}^2$ we consider a triangulation $\mathcal{T}_h$ with maximum mesh edge length $h$ of the domain $\Omega$ and the space of linear finite elements, i.e., the space of piecewise linear continuous finite element, for which we recall that the degrees of freedom (DoFs) needed to uniquely identify a function in $\mathcal{V}_h$ are its values at the vertices,
\[
\mathcal{V}_h =   \;\{ v \in \mathbb{H}^1(\Omega) \,:\, \forall\,\kappa \in \mathcal{T}_h, \; \left.v\right|_{\kappa} \in \mathbb{P}^1[\mathbf{x}] \} \\ =  \operatorname{Span}\{ \phi_{j}(\mathbf{x})\}_{j=1}^{N_{\text{DoFs}}} \subseteq \mathcal{V} \cap \mathcal{C}([0,T];V).
\]

The discrete version of~\eqref{eq:variational-form} can then be written in $\mathcal{V}_h$ as
\[
M \mathbf{u}''(t) + K \mathbf{u}(t) = \mathbf{F}(t) + \mathbf{G}(t),
\]
with { $M$, $K$ the usual mass and stiffness matrices, and $\mathbf{F}(t)$, $\mathbf{G}(t)$ contain the source and Neumann boundary information.}
To impose Dirichlet boundary conditions we use the so-called null-space technique, that is, we eliminates Dirichlet conditions from the problem by operating on the matrix. We build a matrix $Z$ spanning the null-space of the columns of the matrix representing the Dirichlet condition equations, i.e., we restrict the matrices as
\[
M_c = Z^T M Z, \qquad  K_c = Z^T K Z, \qquad \mathbf{F}_c(t) = Z^T\left( \mathbf{F}(t) + \mathbf{G}(t) - K \mathbf{g}_D \right),
\]
where $\mathbf{g}_D$ is the function evaluating the Dirichlet boundary conditions at time $t$ on the relevant degree of freedoms and taking value zero elsewhere. We can now rewrite the scheme~\eqref{eq:lubich_scheme_compact} for this specific case using these matrix expressions.

\section{Numerical experiments}
\label{sec:numerical-experiments}

In the following three sections we first consider the different choices of poles for computing the $\operatorname{sinc}$ function of a matrix and the approach based on exponential sums; Section~\ref{sec:numerical-experiments:poles}. Then we consider a synthetic example of a second-order differential equation of the type~\eqref{eq:theproblem} to show how error analysis allows us to choose the number of poles we need to maintain the convergence order of the method~\eqref{eq:lubich_scheme_compact}; Section~\ref{sec:a-synthetic-example}. Finally, we consider the application of the method to the solution of the discretized finite element version of the wave equation~\eqref{eq:acoustic-wave}; Section~\ref{sec:numerical-experiments:waves}. The code for reproducing the experiments is available in the GitHub repository \href{https://github.com/Cirdans-Home/rationalsecondorder}{github.com/Cirdans-Home/rationalsecondorder}. All the numerical experiments have been run on a Linux laptop with an Intel\textsuperscript{\textregistered} Core\textsuperscript{\texttrademark} i7-8750H CPU at 2.20GHz with 16 Gb of RAM using Matlab R2022a.

\subsection{Benchmark of the pole selection}
\label{sec:numerical-experiments:poles}

In this preliminary section we first compare the convergence of the rational Krylov method for the choices of poles discussed in Section~\ref{sec:rational-approx}. In Table~\ref{tab:test-matrices} we report information on test matrices we are going use for the test. 
\begin{table}[htbp]
    \centering
    \caption{Test matrices: Finite Difference discretization of the Laplacian, centered differences for the 1D case, 5-point stencil for the 2D case, and the linear Finite Element Method discussed in Section~\ref{sec:fem-spaces}. $\lambda_{\min}$, $\lambda_{\max}$ are the smallest and largest eigenvalues respectively.}
    \label{tab:test-matrices}
    \begin{tabular}{rlccc}
        \toprule
        \multicolumn{2}{c}{$A$: Matrix name}      &  Size & $[\lambda_{\min},\lambda_{\max}]$ & Figure\\ 
        \midrule
         1D & FD Laplacian &  2048 & $[2.3\times 10^{-6},4]$ & \ref{fig:poles-1d-lap}\\
         2D & FD Laplacian &  4096 & $[0.0047,7.9953]$ & \ref{fig:poles-2d-lap}\\
         2D & Linear FEM & 1028 & $[0.0253,5.9008]$ & \ref{fig:poles-fem-lap}\\
        \botrule
    \end{tabular}
\end{table}
These are all discretizations of the Laplacian, as our goal is to deal with the discretization of the wave equation discussed in Section~\ref{sec:mol-for-wave}. For all the poles choices we compute the relative error
\[
\frac{\| \operatorname{sinc}(A)\mathbf{v} - V_k \operatorname{sinc}(V_k^T A V_k) V_k^T \mathbf{v} \|}{\| \operatorname{sinc}(A)\mathbf{v} \|}, 
\]
with $V_k$ the matrix whose column spans the relevant rational Krylov subspace $\mathcal{Q}_k(A,\mathbf{v})$, and the matrix-argument $\operatorname{sinc}(A)$ function computed in MATLAB as ``$\operatorname{sinc}(A) \gets \verb|A\funm(A,@sin)|$''. From Figure~\ref{fig:poles-benchmark} we observe that the best results are given by poles obtained from the Padé diagonal approximant of $\operatorname{sinc}$ function here computed in a symbolic way; see Appendix~\ref{sec:sinc-pade-poles}. Due to the parity of the function and by the fact that the diagonals approximant of odd order coincide with the previous even ones, we report only the even cases. The other satisfactory results are obtained with the poles $\mathcal{E}_n$ given by the expansion discussed in Section~\ref{sec:exponential}, which are the complex zeros of the non orthogonal Laguerre polynomials.
\begin{figure}[htb]
    \centering
    \subfloat[1D FD Laplacian\label{fig:poles-1d-lap}]{
%
%
%
\definecolor{mycolor1}{rgb}{0.00000,0.44700,0.74100}%
\definecolor{mycolor2}{rgb}{0.85000,0.32500,0.09800}%
\definecolor{mycolor3}{rgb}{0.92900,0.69400,0.12500}%
\definecolor{mycolor4}{rgb}{0.49400,0.18400,0.55600}%
\definecolor{mycolor5}{rgb}{0.46600,0.67400,0.18800}%
\definecolor{mycolor6}{rgb}{0.30100,0.74500,0.93300}%
\begin{tikzpicture}

\begin{axis}[%
width=0.33\columnwidth,
height=1.3in,
at={(0\columnwidth,0in)},
scale only axis,
xmin=2,
xmax=22,
xlabel style={font=\color{white!15!black}},
xlabel={Number of poles},
ymode=log,
ymin=2.23054503883866e-11,
ymax=0.0495751514419387,
yminorticks=true,
ylabel style={font=\color{white!15!black}},
ylabel={Relative Error},
axis background/.style={fill=white},
axis x line*=bottom,
axis y line*=left,
xmajorgrids,
ymajorgrids,
yminorgrids,
legend style={legend cell align=left, align=left, draw=white!15!black}
]
\addplot [color=mycolor1, line width=2.0pt, mark=o, mark options={solid, mycolor1}]
  table[row sep=crcr]{%
2	0.0495751514419387\\
3	0.0220651726139946\\
4	0.00837859636837678\\
5	0.00251157645991459\\
6	0.000631366175689662\\
7	0.000136862859576147\\
8	2.61123120397544e-05\\
9	4.67692342337327e-06\\
10	7.61468586013128e-07\\
11	1.11577351439746e-07\\
12	1.49406097433467e-08\\
13	1.88110814330385e-09\\
14	2.36051179121638e-10\\
15	7.06947469684652e-11\\
};

\addplot [color=mycolor2, line width=2.0pt, mark=square*, mark options={solid, mycolor2}]
  table[row sep=crcr]{%
4	0.00357512355518266\\
6	0.000379881323259452\\
8	8.12643198323276e-06\\
10	8.73945109290021e-08\\
12	3.11662540660298e-09\\
14	8.08901554771535e-11\\
16	7.50148070991825e-11\\
};

\addplot [color=mycolor3, line width=2.0pt, mark=triangle*, mark options={solid, mycolor3}]
  table[row sep=crcr]{%
5	0.00147506404397597\\
7	2.70531855469897e-05\\
9	7.4266298227117e-08\\
11	5.87856029421194e-10\\
13	4.55575459460845e-11\\
15	8.64560335300326e-11\\
17	1.65614286481213e-10\\
19	3.18225361233277e-11\\
};

\addplot [color=mycolor4, line width=2.0pt, mark=x, mark options={solid, mycolor4}]
  table[row sep=crcr]{%
2	0.0157466231065267\\
4	0.000144774338112685\\
6	3.1546171193386e-07\\
8	2.36823847141114e-10\\
10	7.3700080842856e-11\\
12	7.29617140595032e-11\\
14	7.33461293038416e-11\\
16	6.76089858768978e-11\\
18	7.26637145748342e-11\\
20	5.79402271411048e-11\\
};

\end{axis}
\end{tikzpicture}
    \subfloat[2D FD Laplacian\label{fig:poles-2d-lap}]{
%
%
%
\definecolor{mycolor1}{rgb}{0.00000,0.44700,0.74100}%
\definecolor{mycolor2}{rgb}{0.85000,0.32500,0.09800}%
\definecolor{mycolor3}{rgb}{0.92900,0.69400,0.12500}%
\definecolor{mycolor4}{rgb}{0.49400,0.18400,0.55600}%
\definecolor{mycolor5}{rgb}{0.46600,0.67400,0.18800}%
\definecolor{mycolor6}{rgb}{0.30100,0.74500,0.93300}%
\begin{tikzpicture}

\begin{axis}[%
width=0.33\columnwidth,
height=1.3in,
at={(0\columnwidth,0in)},
scale only axis,
xmin=2,
xmax=22,
xlabel style={font=\color{white!15!black}},
xlabel={Number of poles},
ymode=log,
ymin=1.09426020442448e-13,
ymax=0.140482853660714,
yminorticks=true,
ylabel style={font=\color{white!15!black}},
ylabel={Relative Error},
axis background/.style={fill=white},
axis x line*=bottom,
axis y line*=left,
xmajorgrids,
ymajorgrids,
yminorgrids,
legend style={legend cell align=left, align=left, draw=white!15!black}
]
\addplot [color=mycolor1, line width=2.0pt, mark=o, mark options={solid, mycolor1}]
  table[row sep=crcr]{%
2	0.140482853660714\\
3	0.100247926221694\\
4	0.0622971271010977\\
5	0.0393895031687774\\
6	0.0228573249376433\\
7	0.0110735177298011\\
8	0.00501403089805743\\
9	0.00196862635061023\\
10	0.000680986125434816\\
11	0.000217926591842377\\
12	6.31018670101418e-05\\
13	1.6703929430778e-05\\
14	4.17775602463475e-06\\
15	9.68707507509574e-07\\
};

\addplot [color=mycolor2, line width=2.0pt, mark=square*, mark options={solid, mycolor2}]
  table[row sep=crcr]{%
4	0.0335591387795924\\
6	0.0100933243057053\\
8	0.00193144183828193\\
10	0.000125715352209563\\
12	2.4277300116361e-05\\
14	1.29072879281996e-06\\
16	1.97104790062211e-08\\
};

\addplot [color=mycolor3, line width=2.0pt, mark=triangle*, mark options={solid, mycolor3}]
  table[row sep=crcr]{%
5	0.0150452580249641\\
7	0.00304461428838038\\
9	8.05649041663936e-05\\
11	5.93994344489576e-06\\
13	9.77463991561725e-08\\
15	4.14642127565902e-10\\
17	7.52700931505455e-12\\
19	1.09426020442448e-13\\
};

\addplot [color=mycolor4, line width=2.0pt, mark=x, mark options={solid, mycolor4}]
  table[row sep=crcr]{%
2	0.0902505499008097\\
4	0.0121690184660869\\
6	0.000593078918657477\\
8	9.6321172724386e-06\\
10	6.01321237609366e-08\\
12	1.79993841322537e-10\\
14	4.52122814635133e-13\\
16	1.89577666655437e-13\\
18	2.2646091818289e-13\\
20	3.83329680404901e-13\\
};

\end{axis}
\end{tikzpicture}

    \subfloat[2D Linear FEM\label{fig:poles-fem-lap}]{
%
%
%
\definecolor{mycolor1}{rgb}{0.00000,0.44700,0.74100}%
\definecolor{mycolor2}{rgb}{0.85000,0.32500,0.09800}%
\definecolor{mycolor3}{rgb}{0.92900,0.69400,0.12500}%
\definecolor{mycolor4}{rgb}{0.49400,0.18400,0.55600}%
\definecolor{mycolor5}{rgb}{0.46600,0.67400,0.18800}%
\definecolor{mycolor6}{rgb}{0.30100,0.74500,0.93300}%
\begin{tikzpicture}

\begin{axis}[%
width=0.33\columnwidth,
height=1.3in,
at={(0\columnwidth,0in)},
scale only axis,
xmin=2,
xmax=22,
xlabel style={font=\color{white!15!black}},
xlabel={Number of poles},
ymode=log,
ymin=6.18720580635138e-15,
ymax=0.111544813157411,
yminorticks=true,
ylabel style={font=\color{white!15!black}},
ylabel={Relative Error},
axis background/.style={fill=white},
axis x line*=bottom,
axis y line*=left,
xmajorgrids,
ymajorgrids,
yminorgrids,
legend style={legend cell align=left, align=left, draw=none, fill=none,at={(3in,0in)}}
]
\addplot [color=mycolor1, line width=2.0pt, mark=o, mark options={solid, mycolor1}]
  table[row sep=crcr]{%
2	0.111544813157411\\
3	0.0628180522147167\\
4	0.0321589677620976\\
5	0.0134396101115837\\
6	0.00500757665023514\\
7	0.00159955907942345\\
8	0.000491356293598763\\
9	0.000131045644021065\\
10	3.23334131986041e-05\\
11	6.84868671251421e-06\\
12	1.44367714627154e-06\\
13	2.76314279016716e-07\\
14	4.65003800227469e-08\\
15	7.60073120484164e-09\\
};
\addlegendentry{$\mathcal{L}_n$}

\addplot [color=mycolor2, line width=2.0pt, mark=square*, mark options={solid, mycolor2}]
  table[row sep=crcr]{%
4	0.0171929325866321\\
6	0.00184404600452608\\
8	0.000102077011137601\\
10	1.18315027787482e-05\\
12	3.13507695526061e-07\\
14	4.09591480935796e-09\\
16	1.80245107627297e-10\\
};
\addlegendentry{$\overline{\mathcal{L}}_n$}

\addplot [color=mycolor3, line width=2.0pt, mark=triangle*, mark options={solid, mycolor3}]
  table[row sep=crcr]{%
5	0.00671383760716412\\
7	0.000113493947749235\\
9	6.70285916856751e-06\\
11	7.89504242293594e-08\\
13	2.23384622243327e-10\\
15	2.24403484114417e-12\\
17	1.36110082213692e-14\\
19	6.8824382935789e-15\\
};
\addlegendentry{$\mathcal{E}_n$}

\addplot [color=mycolor4, line width=2.0pt, mark=x, mark options={solid, mycolor4}]
  table[row sep=crcr]{%
2	0.0485729761677972\\
4	0.00160089273335496\\
6	1.38723303211639e-05\\
8	5.56601359619185e-08\\
10	9.32054453676647e-11\\
12	7.52858889020868e-14\\
14	8.47233968892243e-15\\
16	6.18720580635138e-15\\
18	1.03170567169617e-14\\
20	7.30839184473166e-15\\
};
\addlegendentry{{$[n/n]$}-Padé}

\end{axis}
\end{tikzpicture}
    \caption{Relative error for the computation of $\operatorname{sinc}(A)\mathbf{v}$ with $A$ the matrices described in Table~\ref{tab:test-matrices} and $\mathbf{v}$ a randomly generated vector. We test all the pole choices described in Section~\ref{sec:rational-approx}.}
    \label{fig:poles-benchmark}
\end{figure}
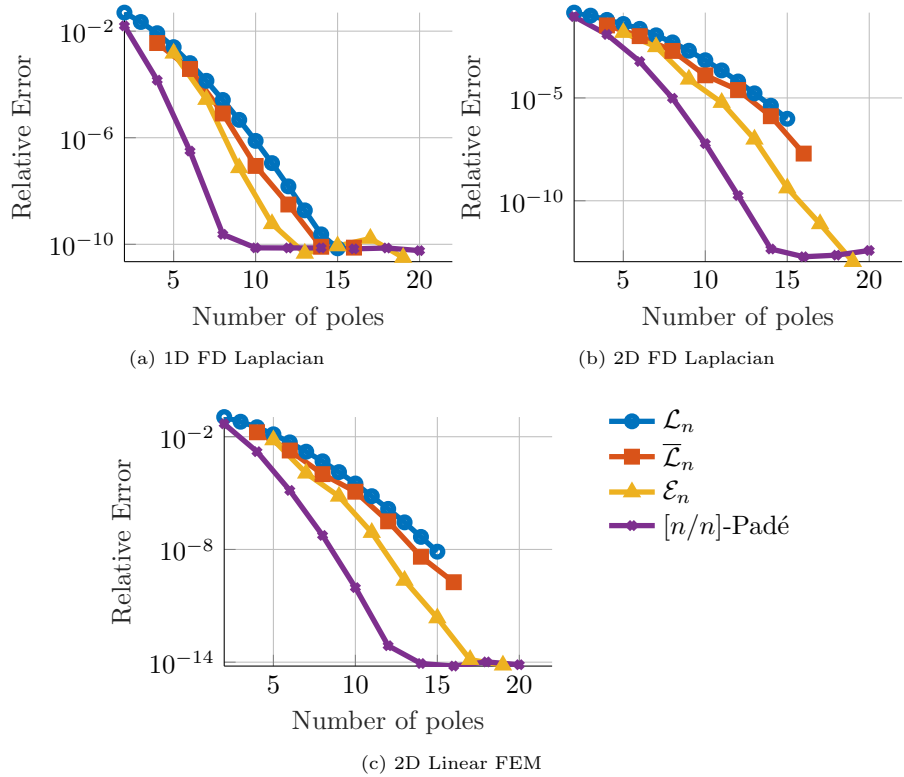
Also observe that the choice of poles as in $\overline{\mathcal{L}}_n,$ which preserve the symmetry in the approximation, slightly improves the convergence obtained using the poles of $\mathcal{L}_n.$ 

In the next set of experiments we compare the attained accuracy in terms of the relative error with the time needed to achieve it. In addition to rational Krylov-type approaches we consider also the exponential sums algorithm discussed in Section~\ref{sec:fourier-inversion}. To ensure that the strategy of approximating the matrix exponential within exponential sums does not reduce the overall accuracy we employ the cubic cost dense-matrix computation of the different exponential; this serves just as a sanity check of the overall procedure since it is indeed an expensive procedure. 
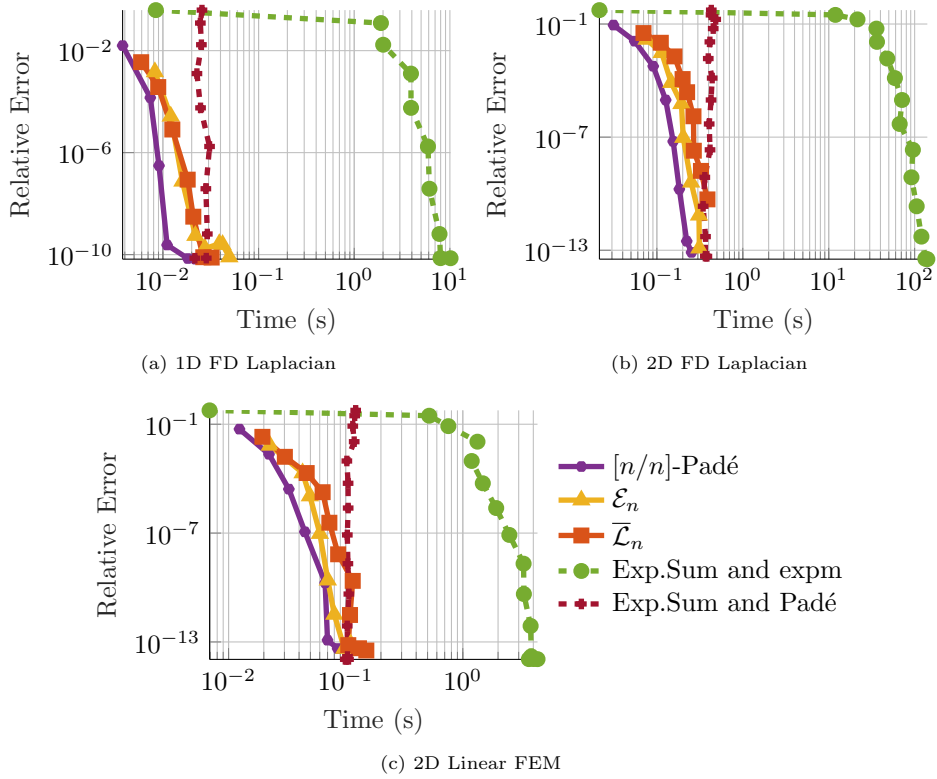
\begin{figure}[htbp]
    \centering

    \subfloat[1D FD Laplacian]{
%
%
%
\definecolor{mycolor1}{rgb}{0.49400,0.18400,0.55600}%
\definecolor{mycolor2}{rgb}{0.92900,0.69400,0.12500}%
\definecolor{mycolor3}{rgb}{0.85000,0.32500,0.09800}%
\definecolor{mycolor4}{rgb}{0.46600,0.67400,0.18800}%
\definecolor{mycolor5}{rgb}{0.63500,0.07800,0.18400}%
\begin{tikzpicture}

\begin{axis}[%
width=0.33\columnwidth,
height=1.3in,
at={(0\columnwidth,0in)},
scale only axis,
xmode=log,
xmin=0.003794,
xmax=10.15333,
xminorticks=true,
xlabel style={font=\color{white!15!black}},
xlabel={Time (s)},
ymode=log,
ymin=6.64919782409009e-11,
ymax=0.388786193440371,
yminorticks=true,
ylabel style={font=\color{white!15!black}},
ylabel={Relative Error},
axis background/.style={fill=white},
axis x line*=bottom,
axis y line*=left,
xmajorgrids,
xminorgrids,
ymajorgrids,
yminorgrids,
legend style={legend cell align=left, align=left, draw=white!15!black}
]
\addplot [color=mycolor1, line width=2.0pt, mark=asterisk, mark options={solid, mycolor1}]
  table[row sep=crcr]{%
0.003794	0.0157616903898085\\
0.007385	0.00014347522783652\\
0.009107	3.08999331047919e-07\\
0.011066	2.38882003094691e-10\\
0.018257	7.04121530115512e-11\\
0.021837	6.64919782409009e-11\\
};

\addplot [color=mycolor2, line width=2.0pt, mark=triangle*, mark options={solid, mycolor2}]
  table[row sep=crcr]{%
0.008248	0.00146458005133532\\
0.012079	2.66586542189185e-05\\
0.01601	7.51594774950872e-08\\
0.021864	5.58252973217184e-10\\
0.027487	1.73497786757354e-10\\
0.038856	2.65763847511351e-10\\
0.042067	2.46064372651223e-10\\
0.049461	8.06166642741207e-11\\
};

\addplot [color=mycolor3, line width=2.0pt, mark=square*, mark options={solid, mycolor3}]
  table[row sep=crcr]{%
0.00593	0.0035313385635004\\
0.00894	0.000373946853910692\\
0.012551	8.05688989222955e-06\\
0.018159	8.73964984844575e-08\\
0.0208	3.07893929761667e-09\\
0.026443	7.91590699833015e-11\\
0.03235	7.65644166557443e-11\\
};

\addplot [dashed, color=mycolor4, line width=2.0pt, mark=otimes*, mark options={solid, mycolor4}]
  table[row sep=crcr]{%
0.008444	0.388786193440371\\
1.901068	0.120789782490323\\
2.007263	0.0170452030391639\\
3.917559	0.00125868080077531\\
3.923221	5.69748840630372e-05\\
5.897356	1.74253692000863e-06\\
6.113754	3.84392570510959e-08\\
7.786305	6.44844487165247e-10\\
7.983531	7.27183624738443e-11\\
10.15333	7.22365687048356e-11\\
};

\addplot [dashed, color=mycolor5, line width=2.0pt, mark=+, mark options={solid, mycolor5}]
  table[row sep=crcr]{%
0.025624	0.388786193440371\\
0.024638	0.120789782490323\\
0.025269	0.0170452030391638\\
0.022692	0.00125868080077558\\
0.024755	5.69748840629833e-05\\
0.030744	1.74253692006269e-06\\
0.028122	3.84392570694149e-08\\
0.029149	6.4484439832067e-10\\
0.028411	7.2716936792572e-11\\
0.021576	7.22351791274704e-11\\
};

\end{axis}
\end{tikzpicture}
    \subfloat[2D FD Laplacian]{
%
%
%
\definecolor{mycolor1}{rgb}{0.49400,0.18400,0.55600}%
\definecolor{mycolor2}{rgb}{0.92900,0.69400,0.12500}%
\definecolor{mycolor3}{rgb}{0.85000,0.32500,0.09800}%
\definecolor{mycolor4}{rgb}{0.46600,0.67400,0.18800}%
\definecolor{mycolor5}{rgb}{0.63500,0.07800,0.18400}%
\begin{tikzpicture}

\begin{axis}[%
width=0.33\columnwidth,
height=1.3in,
at={(0\columnwidth,0in)},
scale only axis,
xmode=log,
xmin=0.021451,
xmax=140.430961,
xminorticks=true,
xlabel style={font=\color{white!15!black}},
xlabel={Time (s)},
ymode=log,
ymin=3.3702404226666e-14,
ymax=0.54712206599271,
yminorticks=true,
ylabel style={font=\color{white!15!black}},
ylabel={Relative Error},
axis background/.style={fill=white},
axis x line*=bottom,
axis y line*=left,
xmajorgrids,
xminorgrids,
ymajorgrids,
yminorgrids,
legend style={legend cell align=left, align=left, draw=white!15!black}
]
\addplot [color=mycolor1, line width=2.0pt, mark=asterisk, mark options={solid, mycolor1}]
  table[row sep=crcr]{%
0.031293	0.0879625324717953\\
0.055253	0.012323033628343\\
0.090724	0.000577727595316761\\
0.126733	9.3434153657962e-06\\
0.15485	5.92853599013789e-08\\
0.182479	1.73225370330452e-10\\
0.222718	3.01198417720186e-13\\
0.253164	7.68871906175272e-14\\
};

\addplot [color=mycolor2, line width=2.0pt, mark=triangle*, mark options={solid, mycolor2}]
  table[row sep=crcr]{%
0.075819	0.0152271824601519\\
0.113786	0.00300215369321938\\
0.146823	7.77154752419822e-05\\
0.193288	5.89711211070612e-06\\
0.202689	9.48489533413838e-08\\
0.250529	4.02830768121151e-10\\
0.309209	7.17843177180665e-12\\
0.312326	1.28996133085166e-13\\
};

\addplot [color=mycolor3, line width=2.0pt, mark=square*, mark options={solid, mycolor3}]
  table[row sep=crcr]{%
0.069881	0.0332430001426952\\
0.111117	0.0101611346738331\\
0.160215	0.00191501719542422\\
0.199384	0.000120834490672698\\
0.224937	2.41790233945686e-05\\
0.266684	1.27653660346662e-06\\
0.269059	1.9041566296818e-08\\
0.326059	1.62282456609926e-09\\
0.387702	4.98591229892108e-11\\
};

\addplot [dashed, color=mycolor4, line width=2.0pt, mark=otimes*, mark options={solid, mycolor4}]
  table[row sep=crcr]{%
0.021451	0.547122065992709\\
11.960617	0.304883057158084\\
21.606291	0.176219647328301\\
35.94178	0.0572306153208974\\
36.441233	0.0112857791038884\\
48.022269	0.00147040808664599\\
58.902008	0.000135709384855721\\
71.320859	9.34081728010161e-06\\
66.949903	4.98312406995699e-07\\
95.349583	2.12255620604874e-08\\
91.795262	7.39035983673146e-10\\
105.578973	2.14381872321717e-11\\
119.904843	5.27381945702339e-13\\
133.361129	3.53968079677977e-14\\
140.430961	3.3702404226666e-14\\
};

\addplot [dashed, color=mycolor5, line width=2.0pt, mark=+, mark options={solid, mycolor5}]
  table[row sep=crcr]{%
0.429987	0.54712206599271\\
0.427273	0.304883057158085\\
0.484121	0.176219647328301\\
0.45277	0.0572306153208974\\
0.413981	0.0112857791038883\\
0.396488	0.00147040808664616\\
0.440404	0.000135709384855676\\
0.427477	9.3408172800564e-06\\
0.40631	4.98312406912302e-07\\
0.415727	2.12255621257876e-08\\
0.361576	7.3903600849816e-10\\
0.345674	2.14383062996946e-11\\
0.368357	5.28395687275071e-13\\
0.366688	4.86269898154153e-14\\
0.382167	4.73866494679749e-14\\
};

\end{axis}
\end{tikzpicture}

    \subfloat[2D Linear FEM]{
%
%
%
\definecolor{mycolor1}{rgb}{0.49400,0.18400,0.55600}%
\definecolor{mycolor2}{rgb}{0.92900,0.69400,0.12500}%
\definecolor{mycolor3}{rgb}{0.85000,0.32500,0.09800}%
\definecolor{mycolor4}{rgb}{0.46600,0.67400,0.18800}%
\definecolor{mycolor5}{rgb}{0.63500,0.07800,0.18400}%
\begin{tikzpicture}

\begin{axis}[%
width=0.33\columnwidth,
height=1.3in,
at={(0\columnwidth,0in)},
scale only axis,
xmode=log,
xmin=0.006888,
xmax=4.343616,
xminorticks=true,
xlabel style={font=\color{white!15!black}},
xlabel={Time (s)},
ymode=log,
ymin=1.09069169372876e-14,
ymax=0.581338386289497,
yminorticks=true,
ylabel style={font=\color{white!15!black}},
ylabel={Relative Error},
axis background/.style={fill=white},
axis x line*=bottom,
axis y line*=left,
xmajorgrids,
xminorgrids,
ymajorgrids,
yminorgrids,
legend style={at={(1.03,0.5)}, anchor=west, legend cell align=left, align=left, draw=none, fill=none}
]
\addplot [color=mycolor1, line width=2.0pt, mark=asterisk, mark options={solid, mycolor1}]
  table[row sep=crcr]{%
0.012392	0.0537371717615407\\
0.02214	0.00220831260381268\\
0.032743	2.64254461648703e-05\\
0.044833	1.17637603446604e-07\\
0.066758	1.81758623209436e-10\\
0.069984	1.25182399506367e-13\\
0.085307	4.35269200694126e-14\\
};
\addlegendentry{$[n/n]$-Padé}

\addplot [color=mycolor2, line width=2.0pt, mark=triangle*, mark options={solid, mycolor2}]
  table[row sep=crcr]{%
0.022517	0.00681423431867943\\
0.042864	0.000205444661980064\\
0.049504	1.1029175762729e-05\\
0.060182	9.35806276113702e-08\\
0.070386	2.53540181400066e-10\\
0.079997	3.27616389793453e-12\\
0.097355	4.32842398346524e-14\\
0.110325	7.67173047258713e-14\\
};
\addlegendentry{$\mathcal{E}_n$}

\addplot [color=mycolor3, line width=2.0pt, mark=square*, mark options={solid, mycolor3}]
  table[row sep=crcr]{%
0.019458	0.0202504569211207\\
0.030254	0.00159909942958877\\
0.046454	0.000204106199752694\\
0.06357	1.79495468344519e-05\\
0.072572	3.72803789760571e-07\\
0.086288	6.76381656614158e-09\\
0.114869	2.31949930246838e-10\\
0.108893	3.05624449878925e-12\\
0.105226	6.68839262086467e-14\\
0.129859	4.49117826015729e-14\\
0.150057	3.36747983675824e-14\\
};
\addlegendentry{$\overline{\mathcal{L}}_n$}

\addplot [dashed, color=mycolor4, line width=2.0pt, mark=otimes*, mark options={solid, mycolor4}]
  table[row sep=crcr]{%
0.006888	0.581338386289497\\
0.516799	0.294157550349406\\
0.752824	0.0784412320569056\\
1.334127	0.0109067828042546\\
1.190968	0.000940827267081357\\
1.481789	5.55666719108095e-05\\
1.927981	2.39674935677947e-06\\
2.497389	7.90116679805137e-08\\
3.303733	2.05959518715406e-09\\
3.322251	4.35905751798882e-11\\
3.793498	7.65248126355237e-13\\
3.842785	1.59820857852273e-14\\
3.666332	1.13122821107538e-14\\
4.343616	1.12586753814205e-14\\
4.012161	1.14001040440783e-14\\
};
\addlegendentry{Exp.Sum and expm}

\addplot [dashed, color=mycolor5, line width=2.0pt, mark=+, mark options={solid, mycolor5}]
  table[row sep=crcr]{%
0.121715	0.581338386289497\\
0.119359	0.294157550349406\\
0.114968	0.0784412320569058\\
0.118223	0.0109067828042546\\
0.102331	0.000940827267081592\\
0.104855	5.55666719106683e-05\\
0.105014	2.39674935688485e-06\\
0.103288	7.90116678494592e-08\\
0.104435	2.05959528880941e-09\\
0.106422	4.3590480870454e-11\\
0.103128	7.65365856039104e-13\\
0.103601	1.56656394256099e-14\\
0.105738	1.09725380563132e-14\\
0.101495	1.09069169372876e-14\\
0.099268	1.10343357397368e-14\\
};
\addlegendentry{Exp.Sum and Padé}

\end{axis}
\end{tikzpicture}
    
    \caption{Computation of the exponential sums~\eqref{eq:fourierapprox} obtained using different approximations for the exponential, relative error versus time (s) graphs. To have a comparison with the results in Figure~\ref{fig:poles-benchmark} we report in both pictures the results obtained with the $[n/n]$-Padé approximation, and the rational Krylov methods with poles $\mathcal{E}_n$, and $\overline{\mathcal{L}}_n$. The number of poles for the $[k/k]$-Padé approximation of the exponential is $k = 15$ for the 1D FD case and $k = 20$ for the 2D FD and Linear FEM cases. The number of Gauss-Legendre quadrature nodes goes from $\nu = 1,\ldots,15$ in all the cases.}
    \label{fig:exponential_sum}
\end{figure}
From the results in Figure~\ref{fig:exponential_sum} we observe that all the approximation methods reach the same accuracy for a comparable number of nodes, and that the pure rational Krylov strategies are the most cost effective.

\subsection{A synthetic example}\label{sec:a-synthetic-example}

In this section we consider a synthetic example of an equation of the form~\eqref{eq:theproblem}. In particular we consider as matrix $A = T_N T_N^T$ with $T_N$ the Rutishauser matrix, i.e., the Toeplitz matrix of size $N$ with eigenvalues in the complex plane which are approximately on the curve $ 2 \cos(2\theta) + 20 i \sin(\theta) $, $\theta \in [0,2\pi]$. As forcing term $f$ we consider the function $f(t) = \frac12 \sin(t)$. The initial conditions are given by the vector identical to one for the positions and by the null vector for the velocities. To have a reference solution, we use the MATLAB integrator \texttt{ode15s} with absolute and relative tolerances equal to~$10^{-12}$ and $3 \times 10^{-14}$, respectively. With this set of experiments firstly we want to verify that the order of Gautschi's method is preserved, secondly that it is possible to use the error analysis we have done to properly choose the number of poles.
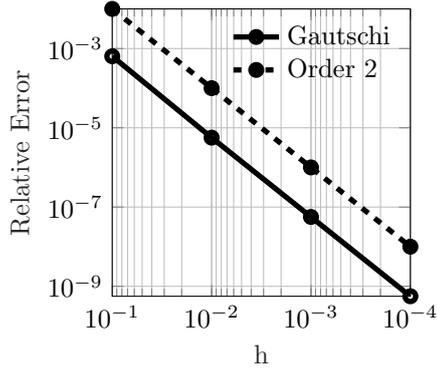
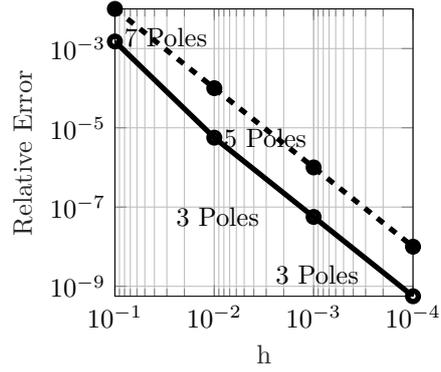
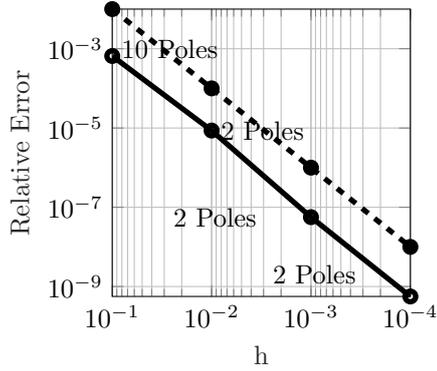
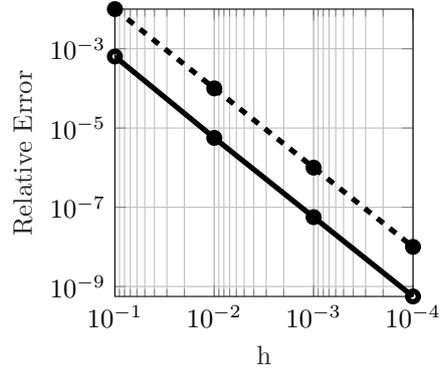
\begin{figure}[htbp]
    \centering
    \subfloat[Dense-matrix function computation. \label{fig:convergence-full}]{
%
%
\begin{tikzpicture}

\begin{axis}[%
width=0.3\columnwidth,
height=1.5in,
at={(0.758in,0.481in)},
scale only axis,
x dir=reverse,
xmode=log,
xmin=0.0001,
xmax=0.1,
xminorticks=true,
xlabel style={font=\color{white!15!black}},
xlabel={h},
ymode=log,
ymin=5.61474329237679e-10,
ymax=0.01,
yminorticks=true,
ylabel style={font=\color{white!15!black}},
ylabel={Relative Error},
axis background/.style={fill=white},
xmajorgrids,
xminorgrids,
ymajorgrids,
yminorgrids,
legend style={legend cell align=left, align=left, draw=none, fill=none}
]
\addplot [color=black, line width=2.0pt, mark=o, mark options={solid, black}]
  table[row sep=crcr]{%
0.1	0.000637722825938219\\
0.01	5.63490989498073e-06\\
0.001	5.63116318690235e-08\\
0.0001	5.61474329237679e-10\\
};
\addlegendentry{Gautschi}

\addplot [color=black, dashed, line width=2.0pt, mark=*, mark options={solid, black}]
  table[row sep=crcr]{%
0.1	0.01\\
0.01	0.0001\\
0.001	1e-06\\
0.0001	1e-08\\
};
\addlegendentry{Order 2}
\end{axis}
\end{tikzpicture}
    \subfloat[Poles obtained from the rational approximation~\eqref{eq:sinc-exp-approximation}.\label{fig:convergence-ratexp}]{
%
%
\begin{tikzpicture}

\begin{axis}[%
width=0.3\columnwidth,
height=1.5in,
at={(0.758in,0.481in)},
scale only axis,
x dir=reverse,
xmode=log,
xmin=0.0001,
xmax=0.1,
xminorticks=true,
xlabel style={font=\color{white!15!black}},
xlabel={h},
ymode=log,
ymin=5.61440672611356e-10,
ymax=0.01,
yminorticks=true,
ylabel style={font=\color{white!15!black}},
ylabel={Relative Error},
axis background/.style={fill=white},
xmajorgrids,
xminorgrids,
ymajorgrids,
yminorgrids,
legend style={legend cell align=left, align=left, draw=white!15!black}
]
\addplot [color=black, line width=2.0pt, mark=o, mark options={solid, black}]
  table[row sep=crcr]{%
0.1	0.00150740226573584\\
0.01	5.63509764202648e-06\\
0.001	5.63276395714019e-08\\
0.0001	5.61440672611356e-10\\
};

\addplot [color=black, dashed, line width=2.0pt, mark=*, mark options={solid, black}]
  table[row sep=crcr]{%
0.1	0.01\\
0.01	0.0001\\
0.001	1e-06\\
0.0001	1e-08\\
};

\node[right, align=left]
at (axis cs:0.1,0.002) {  7 Poles};
\node[right, align=left]
at (axis cs:0.01,5.63490958128422e-06) {  5 Poles};
\node[right, align=left]
at (axis cs:0.03,5.6311406101669e-08) {  3 Poles};
\node[right, align=left]
at (axis cs:0.003,2e-9) {  3 Poles};
\end{axis}
\end{tikzpicture}

    \subfloat[Poles obtained from the rational approximation~\eqref{eq:approx-pade-laguerre1}.\label{fig:convergence-rathyp}]{
%
%
\begin{tikzpicture}

\begin{axis}[%
width=0.3\columnwidth,
height=1.5in,
at={(0.772in,0.473in)},
scale only axis,
x dir=reverse,
xmode=log,
xmin=0.0001,
xmax=0.1,
xminorticks=true,
xlabel style={font=\color{white!15!black}},
xlabel={h},
ymode=log,
ymin=5.61456045597667e-10,
ymax=0.01,
yminorticks=true,
ylabel style={font=\color{white!15!black}},
ylabel={Relative Error},
axis background/.style={fill=white},
xmajorgrids,
xminorgrids,
ymajorgrids,
yminorgrids,
legend style={legend cell align=left, align=left, draw=white!15!black}
]
\addplot [color=black, line width=2.0pt, mark=o, mark options={solid, black}]
  table[row sep=crcr]{%
0.1	0.000656045326286969\\
0.01	8.62187703309163e-06\\
0.001	5.63096588041614e-08\\
0.0001	5.61456045597667e-10\\
};

\addplot [color=black, dashed, line width=2.0pt, mark=*, mark options={solid, black}]
  table[row sep=crcr]{%
0.1	0.01\\
0.01	0.0001\\
0.001	1e-06\\
0.0001	1e-08\\
};

\node[right, align=left]
at (axis cs:0.1,0.001) {  10 Poles};
\node[right, align=left]
at (axis cs:0.01,8.62187702633534e-06) {  2 Poles};
\node[right, align=left]
at (axis cs:0.03,5.63096700255167e-08) {  2 Poles};
\node[right, align=left]
at (axis cs:0.003,2e-9) {  2 Poles};
\end{axis}
\end{tikzpicture}
    \subfloat[Exponential sum algorithm with 10 poles for the rational Krylov approximation of the exponential, and 10 quadrature points to generate the exponential sum.\label{fig:convergence-expsum}]{
%
%
\begin{tikzpicture}

\begin{axis}[%
width=0.3\columnwidth,
height=1.5in,
at={(0.772in,0.473in)},
scale only axis,
x dir=reverse,
xmode=log,
xmin=0.0001,
xmax=0.1,
xminorticks=true,
xlabel style={font=\color{white!15!black}},
xlabel={h},
ymode=log,
ymin=5.61441665447545e-10,
ymax=0.01,
yminorticks=true,
ylabel style={font=\color{white!15!black}},
ylabel={Relative Error},
axis background/.style={fill=white},
xmajorgrids,
xminorgrids,
ymajorgrids,
yminorgrids,
legend style={legend cell align=left, align=left, draw=white!15!black}
]
\addplot [color=black, line width=2.0pt, mark=o, mark options={solid, black}]
  table[row sep=crcr]{%
0.1	0.00063772308539876\\
0.01	5.63490993101205e-06\\
0.001	5.63116268492841e-08\\
0.0001	5.61441665447545e-10\\
};

\addplot [color=black, dashed, line width=2.0pt, mark=*, mark options={solid, black}]
  table[row sep=crcr]{%
0.1	0.01\\
0.01	0.0001\\
0.001	1e-06\\
0.0001	1e-08\\
};

\end{axis}
\end{tikzpicture}
    
    \cprotect\caption{Relative $2$-norm error at final time $T = 1$, for matrix size $N = 20$. We use the bounds from Proposition~\ref{pro:how-many-poles} to determine the number of poles we need to achieve a given tolerance. The $\| \cdot \|_\Sigma$ norm is estimated by evaluating the $\| \cdot \|_\infty$ bound on an equally spaced grid of the interval {$[0,h^2 \lambda_{\text{max}}(A)] = [0,h^2 \verb|1.2138e+03|]$}.}
\end{figure}
In Figure~\ref{fig:convergence-full} we observe that using the dense-matrix computation of the matrix functions, the method~\eqref{eq:lubich_scheme_compact} obtains the desired order of convergence. From Figures~\ref{fig:convergence-ratexp} and~\ref{fig:convergence-rathyp} we observe a notable consequence of the reduction of the time integration step $h$. If we try to obtain an increase in accuracy by reducing $h$, then we produce a scaling of the spectrum of the matrix~$A$. Namely the set $\Sigma$ on which compute the bounds shrinks and this makes the computation of the involved matrix function easier. In other words, we need a lower number of poles to achieve a higher precision via the reduction of the time step $h$. On the other hand, when we want to apply the method based on exponential sums in Figure~\ref{fig:convergence-expsum}, we have a fixed cost per step which is given by the generation of the rational Krylov space for the approximation of the exponential matrix-vector products. The accuracy with which we make this product limits the final accuracy of the quadrature formula, so to maintain the second order we need a number of poles, and therefore of linear system solutions, higher than the other two cases in which the errors combine more favorably. For all strategies the error analysis allows to maintain the order 2 of the method.

To have a comparison, we consider the exponential integrator Adams-Bashforth-N{\o}rsett scheme of stiff order 2 from~\cite{10.1145/1206040.1206044}. This method uses a direct computation of the involved $\varphi$ functions, thus we compare it against our implementation that also uses the direct computation. To enhance the difference with respect to the cost, i.e., having to compute a matrix function of a matrix of double the size, we consider the same test problem but with $N=100$. From the results in Table~\ref{tab:etd2rk} we observe that for a comparable error, having to compute smaller-dimensional matrix functions has the advantage in terms of expected time.
\begin{table}[htbp]
    \centering
    \begin{tabular}{rllll}
\toprule
  & \multicolumn{2}{c}{Gautschi}          & \multicolumn{2}{c}{Adams-Bashforth-N{\o}rsett}           \\
\midrule
$h$      & T (s) & Rel. Err. & T (s) & Rel. Err. \\
\midrule
1.0e-01 & 4.44e-02 & 1.12e-04 & 5.49e-01 & 3.76e-05 \\
1.0e-02 & 3.04e-02 & 1.12e-06 & 2.66e-01 & 4.69e-07 \\
1.0e-03 & 1.42e-02 & 4.71e-09 & 2.85e-01 & 4.80e-09 \\
1.0e-04 & 4.39e-01 & 6.18e-10 & 1.58e+00 & 2.35e-10 \\
\bottomrule
    \end{tabular}
    \caption{Comparison in terms of elapsed time and two-norm relative error of the Gautschi integrator and the Adams-Bashforth-N{\o}rsett scheme of stiff order 2 from~\cite{10.1145/1206040.1206044}.}
    \label{tab:etd2rk}
\end{table}

\subsection{Solution of the linear wave equation}
\label{sec:numerical-experiments:waves}

Let us now test the different strategies implemented for the computation of the $\operatorname{sinc}$ function to compute the matrix-vector products using the matrix functions from~\eqref{eq:stormerfunctions} in the scheme~\eqref{eq:lubich_scheme_compact}. We consider the wave equation~\eqref{eq:acoustic-wave} on the domain and triangular mesh depicted in the first two panels of Figure~\ref{fig:domain-and-mesh}. 
\begin{figure}[htbp]
    \centering
    \includegraphics[width=\columnwidth]{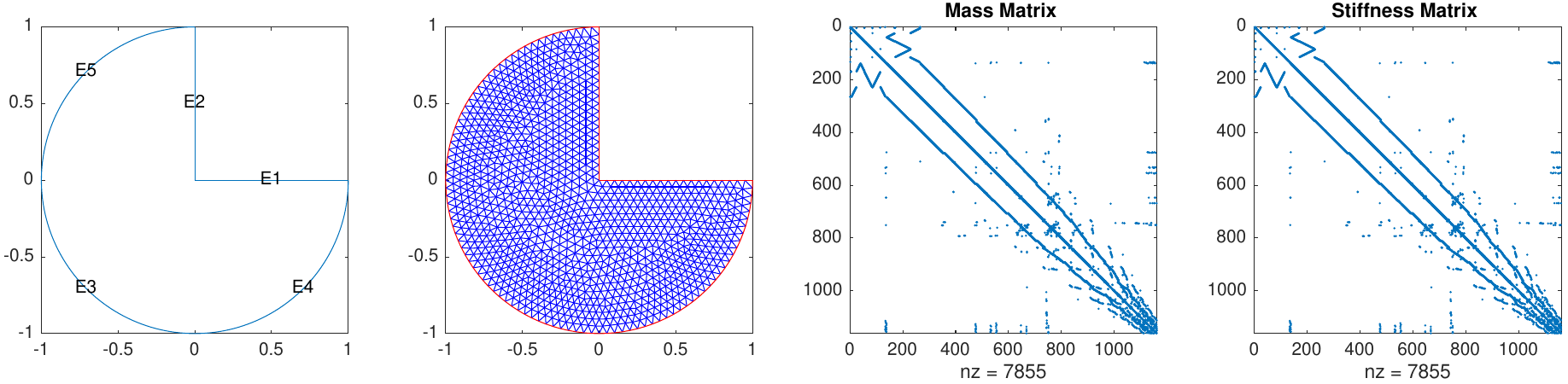}
    \caption{From left to right, we have the domain, the triangular mesh for the solution of the wave equation~\eqref{eq:acoustic-wave}, the mass and stiffness matrix for the $\mathbb{P}^1$-element on the associated triangulation.}
    \label{fig:domain-and-mesh}
\end{figure}
We construct a problem in which the boundary conditions are homogeneous Dirichlet conditions on the whole boundary, i.e., $\Gamma_N = \emptyset$. The initial data for the positions $\mathbf{x} = (x,y)$ is given by a perturbation of the shape
\[
u_0(x,y) = 0.8 \exp(-\nicefrac{(x+0.3)^2}{0.06} -\nicefrac{(y+0.3)^2}{0.06}),
\]
and a zero initial velocity, i.e., $v_0(x,y) = 0$. Since we simply want to test the robustness of the routines for computing the different matrix functions, we also set the forcing term equal to zero. 
\begin{figure}[htbp]
    \centering
    \subfloat[Direct]{\includegraphics[width=0.5\columnwidth]{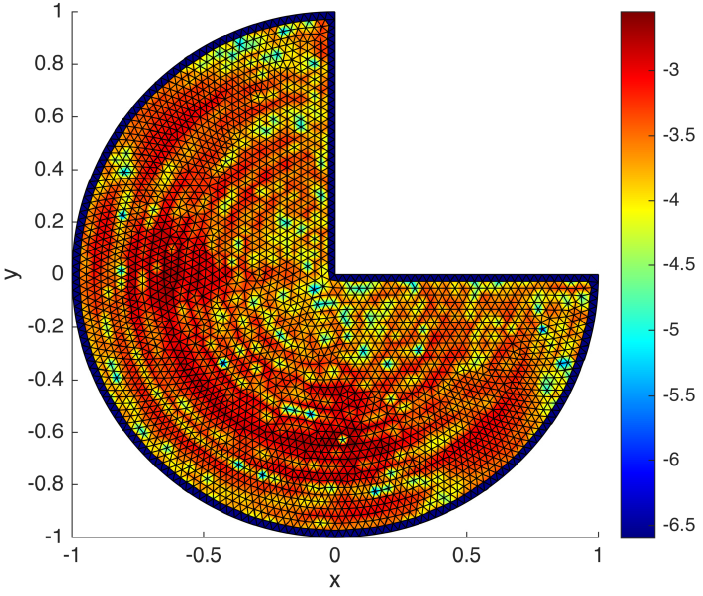}}
    \subfloat[$\mathcal{E}_n$]{\includegraphics[width=0.5\columnwidth]{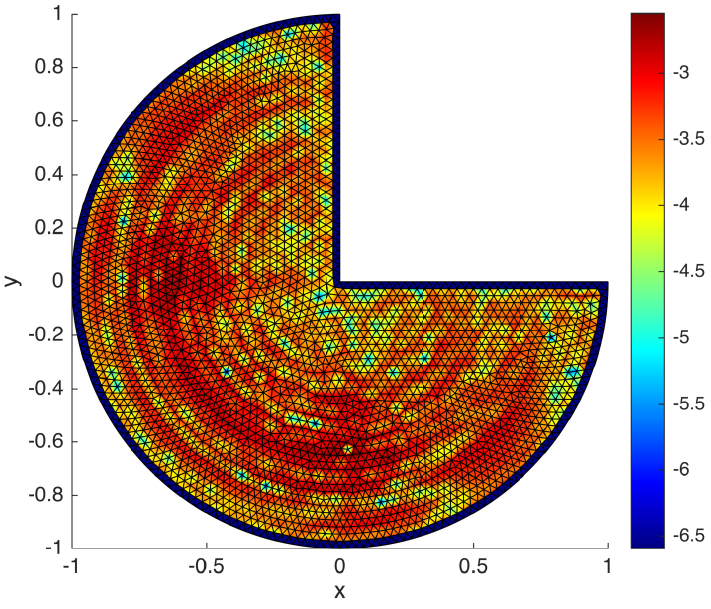}}
    
    \subfloat[$\overline{\mathcal{L}}_n$]{\includegraphics[width=0.5\columnwidth]{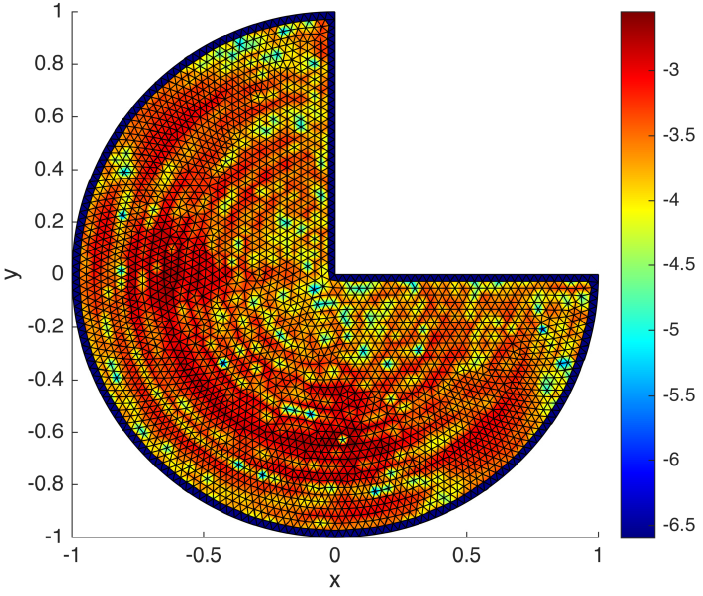}}
    \subfloat[Exponential sum]{\includegraphics[width=0.5\columnwidth]{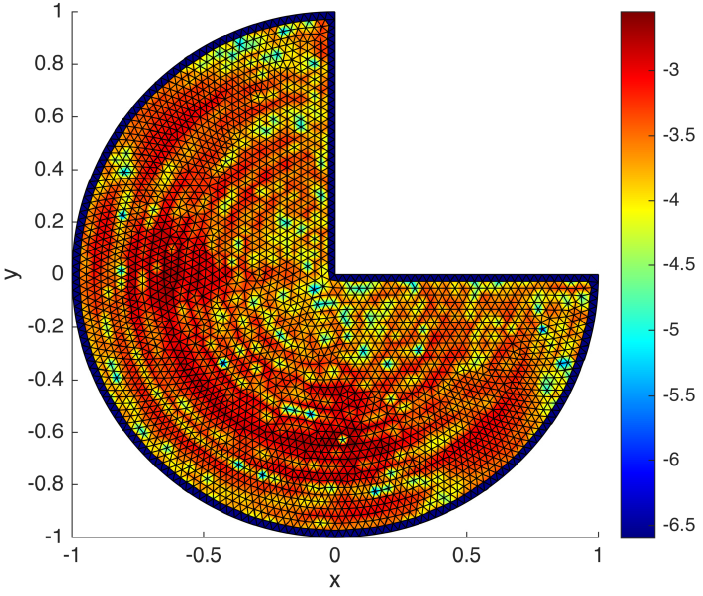}}
    
    \caption{Pointwise absolute error ($\log_{10}$-scale) computed against the solution generated with MATLAB's \texttt{solvepde} command at $T = 1$ with $h = 10^{-2}$ with respect to time, and linear FEM with largest element size of $h_{\text{max}} = 0.03$ in the \texttt{generateMesh} program {($3051$ inner dofs)}. We used four poles both for the $\mathcal{E}_n$ and $\overline{\mathcal{L}}_n$. The exponential sum case has been computed with 6 poles for the rational approximation of the exponential and 12 terms in the sum.}
    \label{fig:wave-eq-errors}
\end{figure}
From the errors reported in Figure~\ref{fig:wave-eq-errors} it can be observed that also in this case all the methods are capable of obtaining the same result as the direct method for a suitable (and limited) choice of the number of poles.

\section{Conclusion}
\label{sec:conclusions}

We have analyzed several strategies for the computation of matrix functions that appear in Gautschi-type trigonometric integrators for second-order differential equations. The analysis includes bounds to determine the number of poles of the rational approaches needed to achieve a specified accuracy. Future developments of the techniques discussed here concern the use of algorithms for the approximate solution of linear systems within rational Krylov spaces; and the possible application of the rational-Krylov methods with respect to the poles we have discussed here directly to exponential integrators for first-order systems. Furthermore, having observed the correlation between convergence rate of the method for the matrix function, amplitude of the time discretization step and global error, one should investigate the adaptive choice of the integration step and the relative adaptive choice of the number of poles to be used in the underlying rational approximation. Along the same lines, the possibility of also considering a space adaptive FEM discretization could further improve the compromise between accuracy and execution speed of the method.

\backmatter

\bmhead{Acknowledgments} The authors are members of the INdAM research group GNCS and thanks the two anonymous referees for their valuable comments and suggestions.

\section*{Declarations}

\bmhead{Funding} This work was partially supported by the ``GNCS Research Project - INdAM'' with code CUP\_E53C22001930001, and by the Spoke 1 ``FutureHPC \& BigData''  of the Italian Research Center on High-Performance Computing, Big Data and Quantum Computing (ICSC) funded by MUR Missione 4 Componente 2 Investimento 1.4: Potenziamento strutture di ricerca e creazione di ``campioni nazionali di R\&S (M4C2-19 )'' - Next Generation EU (NGEU).

\bmhead{Code availability} The code used to produce the results is available at \url{https://github.com/Cirdans-Home/rationalsecondorder}.

\begin{appendices}

\section{Denominators of Padé diagonal approximations}\label{sec:sinc-pade-poles}

Table~\ref{tab:diag-sinc} shows the denominators of the Padé expansions of the $\operatorname{sinc}(r)$ function for even degrees from 2 to 10 calculated in closed form with Mathematica (v. 12.2.0) and the {PadeApproximant} function:
\begin{center}
\begin{verbatim}
Table[CoefficientList[Denominator[PadeApproximant[Sinc[x], 
 {x, 0, n}]], x], {n, 2, 10, 2}]
\end{verbatim}
\end{center}
\begin{table}[htbp]
    \centering
        \caption{Coefficients in the monomial basis (ascending order) of the denominators of the diagonal Padé approximations for the $\operatorname{sinc}(r)$ function for the degree $n = 2m$, $m = 1,\ldots,5$.}
    \label{tab:diag-sinc}
    \begin{tabular}{c|p{0.8\columnwidth}}
    \toprule
    $n$ &  $[n/n]$-Padé denominators coefficients \\
    \midrule
2&$\{$ $1$, $0$, $\frac{1}{20}$ $\}$\\[0.5em]
4&$\{$ $1$, $0$, $\frac{13}{396}$, $0$, $\frac{5}{11088}\}$\\[0.5em]
6&$\{$ $1$, $0$, $\frac{1671}{69212}$, $0$, $\frac{97}{351384}$, $0$, $\frac{2623}{1644477120}$ $\}$\\[0.5em]
8&$\{$ $1$, $0$, $\frac{2290747}{120289892}$, $0$, $\frac{1281433}{7217393520}$, $0$, $\frac{560401}{562956694560}$, $0$, 
$\frac{1029037}{346781323848960}$ $\}$\\[0.5em]
10&$\{$ $1$, $0$, $\frac{34046903537}{2167379498676}$, $0$, $\frac{1679739379}{13726736824948}$, $0$,\\[0.5em] & $\frac{101555058991}{168015258737363520}$, $0$, $\frac{3924840709}{2016183104848362240}$, $0$, $\frac{37291724011}{11008359752472057830400}$ $\}$\\[0.5em]
    \botrule
    \end{tabular}
\end{table}
We stress that there exist a closed form expression of the diagonal Padé approximation of the $\operatorname{sinc}$ function that can be found in~\cite[p. 367]{MR367516} but involves computing determinants of matrices whose entries are binomial coefficients.

Computing the poles, i.e., the zeros, of such polynomials can be a delicate task. As the degree increases, the difference in absolute value of the largest and smallest coefficient drops below the machine precision. Where in general there are ad-hoc algorithms to deal with the computation of the zeros of polynomials that are difficult to represent, we resorted here in exploiting the symbolic functionalities of Mathematica to obtain a tabulation to machine precision of the poles up to degree $20$. The values are available in the Git repository; see Section~\ref{sec:numerical-experiments}.

\section{Depiction of the scalar bounds}\label{sec:appendix-bound}

We report here a depiction of the scalar bounds for the approximations~\eqref{eq:sinc-exp-approximation} and~\eqref{eq:approx-pade-laguerre1} on the real line. %
\begin{figure}[htbp]
    \centering
    \subfloat[Approximation based on the Padé expansion of the exponential~\eqref{eq:sinc-exp-approximation}\label{fig:sinc-exp-approximation}]{\includegraphics[width=0.4\columnwidth]{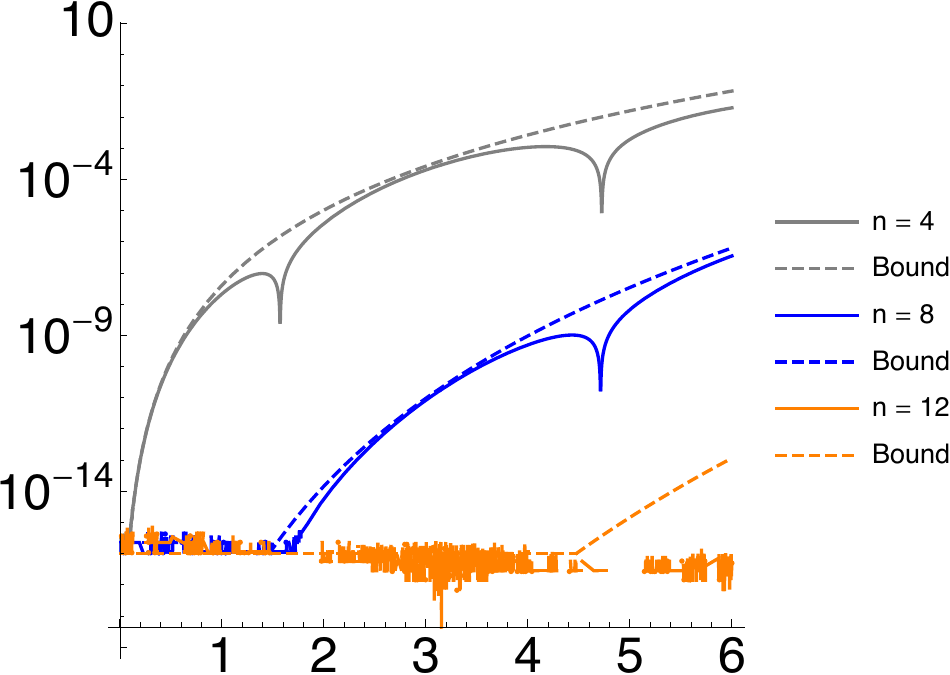}}\hfil
    \subfloat[Approximation based on the Padé expansion of the hypergeometric function~\eqref{eq:approx-pade-laguerre1}\label{fig:approx-pade-laguerre}]{\includegraphics[width=0.4\columnwidth]{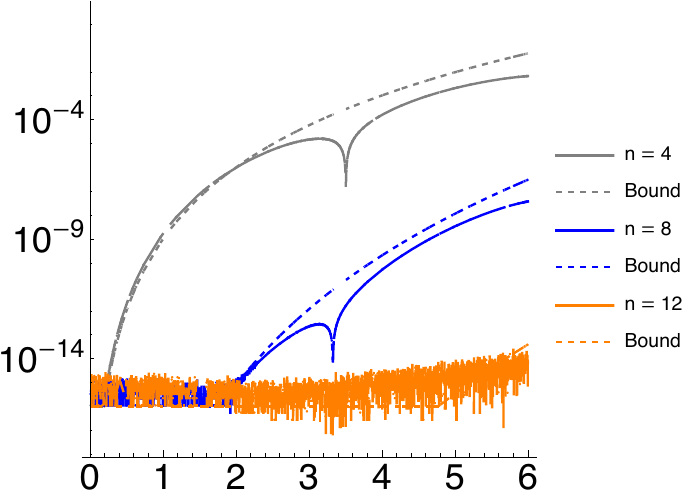}}
    \caption{Comparison of the theoretical bounds and the computed errors on the interval $[0,6]$ for the two rational approximation based on the Padé approximation of the exponential~\eqref{fig:sinc-exp-approximation} and the hypergeometric function~\eqref{fig:approx-pade-laguerre} discussed respectively in Sections~\ref{sec:exponential} and~\ref{sec:confhyperfunc}.}    \label{fig:bounds}
\end{figure}
As shown in the Figure~\ref{fig:bounds} there is an excellent correspondence between the theoretical bounds~\eqref{eq:sinc-exp-approximation-bound} and~\eqref{eq:approx-pade-laguerre-bound}  and the absolute error calculated with 16 significant figures. Furthermore, the results are also in good agreement with the behavior depicted in Figure~\ref{fig:poles-benchmark} with respect to the choice of poles for a given matrix $A$. Specifically, they confirm the behavior according to which the poles obtained by the exponential expansion return a better convergence than those obtained by the expansion based on the hypergeometric function.

\end{appendices}

\bibliography{bibliography}%

\end{document}